\definecolor{webgreen}{rgb}{0,.5,0}
\definecolor{webbrown}{rgb}{.6,0,0}
\theoremstyle{plain}
\newtheorem{theorem}{Theorem}
\newtheorem{corl}[theorem]{Corollary}
\newtheorem{prop}[theorem]{Proposition}
\newtheorem{lemma}[theorem]{Lemma}
\theoremstyle{definition}
\numberwithin{equation}{section}
\numberwithin{theorem}{section}
\numberwithin{figure}{section}
\numberwithin{table}{section}
\newcommand{\NN}{\mathbb{N}}
\newcommand{\ZZ}{\mathbb{Z}}
\newcommand{\QQ}{\mathbb{Q}}
\newcommand{\set}[1]{\left\{#1\right\}}
\newcommand{\valueat}[1]{{\,}_{\big|\, #1}}
\newcommand{\opdf}[2]{\nabla_{\hspace{-0.5ex}#1}^{\hspace{-0.03ex}#2}}
\newcommand{\opsh}{\eth}
\newcommand{\andq}{\quad \text{and} \quad}
\newcommand{\textq}[1]{\quad \text{#1} \quad}
\newcommand{\bb}{\mathbf{b}}
\newcommand{\BN}{\mathbf{B}}
\newcommand{\BNP}{\widehat{\BN}}
\newcommand{\BNPD}{\widehat{\overline{\BN}}}
\newcommand{\BNN}{\overline{\mathcal{B}}}
\newcommand{\PT}{\widetilde{\psi}}
\newcommand{\QT}{\widetilde{Q}}
\newcommand{\SH}{\widehat{S}}
\newcommand{\WQ}{\mathcal{W}}
\newcommand{\arxiv}[1]{\href{https://arxiv.org/abs/#1}{arXiv:#1}}
\newcommand{\spod}[1]{\allowbreak\if@display\mkern8mu\else\mkern4mu\fi(#1)}
\newcommand{\smod}[1]{\spod{{\operator@font mod}\mkern6mu#1}}
\begin{document}

\title[Wilson's theorem modulo higher prime powers III]
{Wilson's theorem modulo higher prime powers III:\\
The cases modulo $p^6$ and $p^7$} 
\author{Bernd C. Kellner}
\address{G\"ottingen, Germany}
\email{bk@bernoulli.org}
\subjclass[2020]{11B65 (Primary), 11B68 (Secondary)}
\keywords{Fermat quotient, Wilson quotient, Wilson's theorem, 
Bernoulli number and polynomial, power sum, supercongruence.}

\begin{abstract}
Extending previous work of the author, we compute the Wilson quotient modulo 
$p^5$ and $p^6$, and equivalently $(p-1)!$ modulo $p^6$ and $p^7$. Further, 
we determine some power sums of the Fermat quotients up to modulo $p^6$.
Subsequently, we discuss some patterns that occur in the $p$-adic coefficients
of the Wilson quotient as well as of $(p-1)!$, whereby the original congruence
$(p-1)! \equiv -1 \pmod{p}$ fits perfectly into the theory. 
\end{abstract}

\maketitle


\section{Introduction}

Let $p$ be always a prime. Wilson's theorem states that
\[
  (p-1)! \equiv -1 \smod{p},
\]
and the Wilson quotient is defined by
\[
  \WQ_p = \frac{(p-1)! + 1}{p}.
\]
We extend our previous work to compute supercongruences of $\WQ_p$ and $(p-1)!$
modulo higher prime powers of $p$. For the detailed theory, we refer to 
\cite{Kellner:2025} and \cite{Kellner:2025b}. 

For $n \geq 1$ and any prime $p \geq 7$, define the divided Bernoulli numbers 
\begin{equation} \label{eq:bnn-1}
  \BNN_n = \frac{\BN_{n(p-1)}+\frac{1}{p}-1}{n(p-1)}, 
\end{equation}
which are $p$-integral and satisfy the Kummer congruences 
\[
  \BNN_n \equiv \BNN_m \pmod{p} \quad (n, m \geq 1).
\]
Similarly, define
\begin{equation} \label{eq:bnn-2}
  \BNN_{n,d} = \frac{\BN_{n(p-1)-d}}{n(p-1)-d} \textq{with} d \in \set{2,4}\!,
\end{equation}
which are also $p$-integral and obey the Kummer congruences. The Bernoulli numbers 
are introduced in the next section. An \textsl{overbar}, e.g., $\BNN_n$, always 
indicates a \textsl{divided} Bernoulli number. The main results of the paper 
extend the results of \cite{Kellner:2025b}, as follows.

\begin{theorem} \label{thm:main}
Let $p \geq 7$ be a prime. Then we have
\begin{align*}
  \WQ_p &\equiv \sum_{\nu=1}^{5} \omega_\nu \, p^{\nu-1} \smod{p^5} \\
\shortintertext{and equivalently}
  (p-1)! &\equiv \sum_{\nu=0}^{5} \omega_\nu \, p^\nu \smod{p^6},
\end{align*}
where
\begin{align*}
  \omega_0 &= -1, \\
  \omega_1 &\equiv -5 \BNN_1 + 10 \BNN_2 - 10 \BNN_3 + 5 \BNN_4 - \BNN_5 \smod{p^5}, \\
  \omega_2 &\equiv -\tfrac{5}{2} \BNN_1^2 + \tfrac{15}{2} \BNN_2^2
  + \tfrac{5}{2} \BNN_3^2 + \BNN_1 \BNN_4 - 9 \BNN_2 \BNN_3 \smod{p^4}, \\
  \omega_3 &\equiv -\tfrac{1}{2} \BNN_1 \BNN_2^2
  - \BNN_1^2 (\tfrac{5}{3} \BNN_1 - \tfrac{5}{2} \BNN_2 + \tfrac{1}{2} \BNN_3)
  - \BNN_{1,2} + \BNN_{2,2} - \tfrac{1}{3} \BNN_{3,2} \smod{p^3}, \\
  \omega_4 &\equiv -\tfrac{5}{24} \BNN_1^4 + \tfrac{1}{6} \BNN_1^3 \BNN_2
  -\tfrac{2}{3} \BNN_1 \BNN_{1,2} + \tfrac{1}{3} \BNN_2 \BNN_{2,2} \smod{p^2}, \\
  \omega_5 &\equiv -\tfrac{1}{120} \BNN_1^5 - \tfrac{1}{6} \BNN_1^2 \BNN_{1,2} - \tfrac{1}{5} \BNN_{1,4} \smod{p}. 
\end{align*}
\end{theorem}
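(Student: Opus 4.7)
The plan is to start from the identity
\[
  \bigl((p-1)!\bigr)^{p-1} = \prod_{k=1}^{p-1} k^{p-1} = \prod_{k=1}^{p-1}\bigl(1 + p\, q_p(k)\bigr),
\]
where $q_p(k) = (k^{p-1}-1)/p$ denotes the Fermat quotient. Expanding the right-hand product to precision $p^6$ yields
\[
  \bigl((p-1)!\bigr)^{p-1} \equiv 1 + \sum_{j=1}^{5} p^j\, e_j \smod{p^6},
\]
where $e_j = e_j\bigl(q_p(1),\ldots,q_p(p-1)\bigr)$ is the $j$-th elementary symmetric polynomial in the Fermat quotients. By Newton's identities, each $e_j$ is a $p$-integral $\QQ$-polynomial (for $p \ge 7$) in the power sums $s_m = \sum_{k=1}^{p-1} q_p(k)^m$, so the right-hand side is completely controlled by $s_1,\ldots,s_5$.

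Next I would substitute the congruences for the $s_m$ that are established earlier in the paper (precisely the ``power sums of the Fermat quotients up to modulo $p^6$'' announced in the abstract). Since $s_m$ first enters the expansion at level $p^m$, the required precisions are $s_1 \smod{p^5}$, $s_2 \smod{p^4}$, $s_3 \smod{p^3}$, $s_4 \smod{p^2}$, and $s_5 \smod{p}$, each expressed through the divided Bernoulli numbers $\BNN_n$ and $\BNN_{n,d}$. This converts $((p-1)!)^{p-1} \smod{p^6}$ into an explicit polynomial in the $\BNN$'s.

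To recover $(p-1)!$ itself I would write the result as $1+X$ with $X \equiv 0 \smod p$ and apply the $p$-adically convergent binomial series
\[
  (1+X)^{1/(p-1)} = \sum_{j\ge 0} \binom{1/(p-1)}{j}\, X^j,
\]
using $1/(p-1) \equiv -(1+p+p^2+p^3+p^4+p^5) \smod{p^6}$ in $\ZZ_p$. This extracts one $(p-1)$-th root; Wilson's theorem $(p-1)! \equiv -1 \smod p$ pins it down as $(p-1)! \equiv -(1+X)^{1/(p-1)} \smod{p^6}$ (permissible because $p-1$ is even, so $(-1)^{p-1}=1$). Collecting terms by powers of $p$ then delivers the coefficients $\omega_\nu$, and $\WQ_p = ((p-1)!+1)/p$ immediately yields the companion congruence modulo $p^5$.

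The main obstacle is the bookkeeping: each $\omega_\nu$ must be accurate to $p^{6-\nu}$, forcing careful precision tracking when multiplying Newton expressions for $e_j$ by the $\BNN$-expansions of $s_m$ and then applying the binomial extraction of the $(p-1)$-th root. Many cross terms collapse through the Kummer congruences $\BNN_n \equiv \BNN_m \smod p$, and the clean rational coefficients $-\tfrac{5}{24}$, $\tfrac{1}{120}$, etc., emerge only after these cancellations. Organizing the computation so that the denominators $2,6,24,120$ from Newton's identities and the binomial series combine transparently with the $\BNN$-products to reproduce the stated $\omega_\nu$ exactly is the step I expect to demand the most care.
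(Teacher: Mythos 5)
Your strategy is sound and would prove the theorem, but it reaches the key intermediate identity by a genuinely different route. The paper does not re-derive the link between $(p-1)!$ and the Fermat-quotient power sums: it invokes Theorem~\ref{thm:kel} from Part~I of the series, which packages that link into the recursive polynomials $\psi_\nu$ (reformulated as Theorem~\ref{thm:kel2} with the rescaled $\PT_\nu$, $\QT_\nu$ of Table~\ref{tab:psi2}); the substitution of the Bernoulli-number congruences of Lemma~\ref{lem:qp-bnn} and the subsequent Kummer-congruence simplifications then do all the work. You instead rebuild that link from scratch: the identity $((p-1)!)^{p-1}=\prod_k(1+p\,q_p(k))$, elementary symmetric functions converted to power sums via Newton's identities (the denominators divide $j!$ and are units for $j\le 5<p$), and extraction of the $(p-1)$-th root by the $p$-adic binomial series, with the branch correctly pinned down by Wilson's theorem since $\mu_{p-1}\cap(1+p\ZZ_p)=\{1\}$ and $-1$ is its own Teichm\"uller lift. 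Your precision accounting is also right: $s_m$ is needed modulo $p^{6-m}$, which the paper's Theorem~\ref{thm:main3} (equivalently Lemma~\ref{lem:qp-bnn}) supplies for $p\ge 7$. What your route buys is self-containedness, avoiding the $\psi_\nu$ machinery of Part~I; what it costs is that the $\binom{1/(p-1)}{j}$ coefficients reintroduce geometric series in $p$ at every order, making the raw expansion somewhat messier. Two caveats: the cancellations you will need are the \emph{generalized} Kummer congruences of Proposition~\ref{prop:gen-congr} (higher forward differences vanishing modulo $p^r$), not merely $\BNN_n\equiv\BNN_m\smod{p}$ --- for instance the paper must add $\tfrac{5}{2}(\BNN_1-2\BNN_2+\BNN_3)^2\equiv 0\smod{p^4}$ to reach the stated form of $\omega_2$ --- and the explicit computation, which is the actual substance of Section~\ref{sec:proof}, remains to be carried out in your proposal.
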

\medskip

\begin{theorem} \label{thm:main2}
Let $p \geq 11$ be a prime. Then we have
\begin{align*}
  \WQ_p &\equiv \sum_{\nu=1}^{6} \omega_\nu \, p^{\nu-1} \smod{p^6}
\shortintertext{and equivalently}
  (p-1)! &\equiv \sum_{\nu=0}^{6} \omega_\nu \, p^\nu \smod{p^7},
\end{align*}
where
\begin{align*}
  \omega_0 &= -1, \\
  \omega_1 &\equiv -6 \BNN_1 + 15 \BNN_2 - 20 \BNN_3 + 15 \BNN_4 - 6 \BNN_5 + \BNN_6 \smod{p^6}, \\
  \omega_2 &\equiv \BNN_1 (-\tfrac{13}{2} \BNN_1 + 15 \BNN_2 - 9 \BNN_3 + 2 \BNN_4)
  + \BNN_2 (-\tfrac{7}{2} \BNN_2 + 3 \BNN_4 - \BNN_5) - \tfrac{1}{2} \BNN_3^2 \smod{p^5}, \\
  \omega_3 &\equiv \BNN_1^2 (-\tfrac{10}{3} \BNN_1
  + \tfrac{15}{2} \BNN_2 - 3 \BNN_3 + \tfrac{1}{2} \BNN_4)
  + \BNN_2^2 (-3 \BNN_1 + \tfrac{1}{6} \BNN_2) + \BNN_1 \BNN_2 \BNN_3 \\
  &\quad - \tfrac{4}{3} \BNN_{1,2} + 2 \BNN_{2,2}
  - \tfrac{4}{3} \BNN_{3,2} + \tfrac{1}{3} \BNN_{4,2} \smod{p^4}, \\
  \omega_4 &\equiv \BNN_1^3 (-\tfrac{5}{8} \BNN_1 + \BNN_2 - \tfrac{1}{6} \BNN_3)
  - \tfrac{1}{4} \BNN_1^2 \BNN_2^2 
  - \BNN_1 \BNN_{1,2} + \BNN_2 \BNN_{2,2} - \tfrac{1}{3} \BNN_3 \BNN_{3,2} \smod{p^3}, \\
  \omega_5 &\equiv -\tfrac{1}{20} \BNN_1^5 + \tfrac{1}{24} \BNN_1^4 \BNN_2
  - \tfrac{1}{3}  \BNN_1 \BNN_2 \BNN_{1,2} - \tfrac{1}{2} \BNN_1^2 \BNN_{2,2}
  + \tfrac{2}{3} \BNN_1 \BNN_2 \BNN_{2,2}
  - \tfrac{2}{5} \BNN_{1,4} + \tfrac{1}{5} \BNN_{2,4} \smod{p^2}, \\
  \omega_6 &\equiv -\tfrac{1}{720} \BNN_1^6 - \tfrac{1}{18} \BNN_1^3 \BNN_{1,2}
  - \tfrac{1}{18} \BNN_{1,2}^2 - \tfrac{1}{5} \BNN_1 \BNN_{1,4} \smod{p}. 
\end{align*}
\end{theorem}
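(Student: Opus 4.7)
The approach is to extend the derivation used for Theorem~\ref{thm:main} by exactly one more order in $p$. In the framework of \cite{Kellner:2025b}, one starts from the identity
\[
  (1 - p\WQ_p)^{p-1} = \prod_{a=1}^{p-1} \bigl(1 + p\, q_a\bigr), \quad q_a = \frac{a^{p-1}-1}{p},
\]
a direct consequence of $a^{p-1} = 1 + p q_a$ combined with $(p-1)! = -1 + p\WQ_p$. Taking the $p$-adic logarithm of both sides and expanding in power series expresses $\WQ_p$ modulo $p^N$ as a polynomial in the Fermat-quotient power sums $S_k = \sum_{a=1}^{p-1} q_a^k$, with $S_k$ contributing at $p$-adic weight $k$. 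To reach modulus $p^6$ for $\WQ_p$ one therefore needs $S_k \smod{p^{6-k}}$ for $k = 1, \ldots, 5$ and $S_6 \smod{p}$.

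Each $S_k$ is in turn expanded into a polynomial in the divided Bernoulli numbers $\BNN_n$ and $\BNN_{n,d}$: one rewrites $q_a^k$ via the binomial theorem as an alternating sum of $a^{j(p-1)}$ for $j = 0, \ldots, k$ (and, after $p$-adic correction, of $a^{j(p-1)-d}$ for $d \in \{2,4\}$), applies a Faulhaber-type formula to each resulting power, and then uses the Kummer congruences $\BNN_n \equiv \BNN_m \smod p$ and $\BNN_{n,d} \equiv \BNN_{m,d} \smod p$ to fold terms with equal $p$-adic residue. The coefficients $(-1)^k\binom{6}{k}$ visible in $\omega_1$ are precisely the trace of this initial binomial expansion at modulus $p^6$ and offer a useful consistency check on the computation.

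With $S_1, \ldots, S_6$ expanded to one order deeper than in \cite{Kellner:2025b}, the proof reduces to substituting these expansions into the formal expression for $\WQ_p$, collecting monomials of equal $p$-adic weight into the six $\omega_\nu$, and finally passing from $\WQ_p$ modulo $p^6$ to $(p-1)! = p\,\WQ_p - 1$ modulo $p^7$. The hypothesis $p \geq 11$ enters to guarantee that the $\BNN_n$ for $n \leq 6$ and the $\BNN_{n,d}$ in the required ranges are $p$-integral with valid Kummer congruences at the needed depth, and that the rational coefficients appearing in the $\omega_\nu$, with denominators $2, 3, 5$ and at $\omega_6$ also $720$, remain units modulo $p$.

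The main obstacle is bookkeeping rather than any new idea: the number of monomials in $\BNN_n$ and $\BNN_{n,d}$ contributing to each $\omega_\nu$ grows quickly, and at the highest order one must track cross-terms of up to six distinct Bernoulli-type factors while keeping every $p$-adic weight aligned. In practice this step must be executed, or at least independently verified, by computer algebra, and the resulting congruences cross-checked numerically on many small primes $p \geq 11$.
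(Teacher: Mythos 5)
Your route is essentially the paper's own: the product identity you start from is the source of the quoted Theorem~\ref{thm:kel} (from \cite{Kellner:2025}), which expresses $\WQ_p$ modulo $p^r$ as a fixed polynomial in the power sums $Q_p(1),\ldots,Q_p(r)$; the expansion of each power sum into divided Bernoulli numbers via binomial expansion, Faulhaber's formula, and the generalized Kummer congruences is exactly what Corollary~\ref{cor:qp-diff}, Proposition~\ref{prop:qp-bnp-2} and Lemma~\ref{lem:qp-bnn-2} carry out; and the final substitute-and-collect step (including the simplification of the raw output into the stated closed forms by adding Kummer-congruence zero sums) is likewise done by computer algebra in the paper. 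So there is no methodological difference to record.

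There is, however, one concrete slip in your precision bookkeeping. Since $Q_p(k)$ enters the identity with weight $p^k$ (equivalently, the paper works with $\QT_k = \tfrac{1}{k}\,p^{k-1} Q_p(k)$ and needs these modulo $p^6$), determining $\WQ_p \smod{p^6}$ requires $Q_p(k) \smod{p^{7-k}}$, i.e.\ $Q_p(1) \smod{p^6}$, $Q_p(2) \smod{p^5}$, and so on down to $Q_p(6) \smod{p}$. Your prescription ``$S_k \smod{p^{6-k}}$ for $k=1,\ldots,5$'' is one power of $p$ short throughout (and is internally inconsistent at $k=6$); following it literally, the computation could not produce $\omega_1 \smod{p^6}$ or any of the higher $\omega_\nu$ at the stated precision. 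Relatedly, the power-sum congruences of \cite{Kellner:2025b} must be pushed two orders deeper (from modulo $p^4$ to modulo $p^6$), not one. These are fixable accounting errors rather than conceptual ones, but in a proof whose entire content is this bookkeeping they need to be corrected before the verification can succeed.
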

\medskip

The computation of the above congruences in Theorems~\ref{thm:main} and~\ref{thm:main2} 
depends on the Fermat quotients 
\begin{align*}
  q_p(a) &= \frac{a^{p-1} - 1}{p} \quad (1 \leq a < p) \\
\shortintertext{and their powers sums}
  Q_p(n) &= \sum_{a=1}^{p-1} q_p(a)^n \quad (n \geq 1).
\end{align*}
For congruences of the power sums $Q_p$ up to$\smod{p^4}$, see \cite{Kellner:2025b}.
Here, these congruences are extended up to$\smod{p^6}$, as follows.

\begin{theorem} \label{thm:main3}
Let $p$ be an odd prime. The congruences $\smod{p^5}$ and $\smod{p^6}$ hold for 
$p \geq 7$ and $p \geq 11$, respectively. We have
\begin{align*}
  Q_p(1) &\equiv (p-1) \BNN_1 - p^2 \, \BNN_{1,2} + \tfrac{11}{6} p^3 \, \BNN_{1,2}
  - p^4 (\BNN_{1,2} + \BNN_{1,4}) \\
  &\quad + p^5 (\tfrac{1}{6} \BNN_{1,2} + \tfrac{137}{60} \BNN_{1,4}) \smod{p^6} \\
  &\equiv (p-1) \BNN_1 - p^2 \, \BNN_{1,2} + \tfrac{11}{6} p^3 \, \BNN_{1,2}
  - p^4 (\BNN_{1,2} + \BNN_{1,4}) \smod{p^5}, \\
  \tfrac{1}{2} p \, Q_p(2) &\equiv (p-1) (\BNN_2 - \BNN_1) + p^2 (\BNN_{1,2} - 2 \BNN_{2,2})
  + p^3 (-\tfrac{11}{6} \BNN_{1,2} + \tfrac{13}{3} \BNN_{2,2}) \\
  &\quad + p^4 (\BNN_{1,2} - 3 \BNN_{2,2} + \BNN_{1,4} - 3 \BNN_{2,4})
  + p^5 (\tfrac{1}{2} \BNN_{1,2} + \tfrac{77}{12} \BNN_{1,4}) \smod{p^6} \\
  &\equiv (p-1) (\BNN_2 - \BNN_1) + p^2 (\BNN_{1,2} - 2 \BNN_{2,2}) \\
  &\quad + p^3 (-\tfrac{11}{6} \BNN_{1,2} + \tfrac{13}{3} \BNN_{2,2})
  - p^4 (2 \BNN_{1,2} + 2 \BNN_{1,4}) \smod{p^5}, \\
  \tfrac{1}{3} p^2 \, Q_p(3) &\equiv (p-1) (\BNN_3 - 2 \BNN_2 + \BNN_1) \\
  &\quad + p^2 (-\BNN_{1,2} + 4 \BNN_{2,2} - \tfrac{10}{3} \BNN_{3,2})
  + p^3 (\tfrac{11}{6} \BNN_{1,2} - \tfrac{26}{3} \BNN_{2,2} + \tfrac{47}{6} \BNN_{3,2}) \\
  &\quad + p^4 (5 \BNN_{1,2} - 6 \BNN_{2,2} + 6 \BNN_{1,4} - 8 \BNN_{2,4})
  + p^5 (\tfrac{1}{3} \BNN_{1,2} + \tfrac{47}{6} \BNN_{1,4}) \smod{p^6} \\
  &\equiv (p-1) (\BNN_3 - 2 \BNN_2 + \BNN_1)
  + p^2 (-\BNN_{1,2} + 4 \BNN_{2,2} - \tfrac{10}{3} \BNN_{3,2}) \\
  &\quad + p^3 (-6 \BNN_{1,2} + 7 \BNN_{2,2})
  - p^4 (\BNN_{1,2} + 2 \BNN_{1,4}) \smod{p^5}, \\
  \tfrac{1}{4} p^3 \, Q_p(4) &\equiv (p-1) (\BNN_4 - 3 \BNN_3 + 3 \BNN_2 - \BNN_1) \\
  &\quad + p^2 (\BNN_{1,2} - 6 \BNN_{2,2} + 10 \BNN_{3,2} - 5 \BNN_{4,2})
  + p^3 (\tfrac{21}{2} \BNN_{1,2} - 24 \BNN_{2,2} + \tfrac{27}{2} \BNN_{3,2}) \\
  &\quad + p^4 (3 \BNN_{1,2} - 3 \BNN_{2,2} + 8 \BNN_{1,4} - 9 \BNN_{2,4})
  + \tfrac{9}{2} p^5 \BNN_{1,4} \smod{p^6} \\
  &\equiv (p-1) (\BNN_4 - 3 \BNN_3 + 3 \BNN_2 - \BNN_1)
  + p^2 (-4 \BNN_{1,2} + 9 \BNN_{2,2} - 5 \BNN_{3,2}) \\
  &\quad + p^3 (-3 \BNN_{1,2} + 3 \BNN_{2,2})
  - p^4 \BNN_{1,4} \smod{p^5}, \\
  \tfrac{1}{5} p^4 \, Q_p(5) &\equiv (p-1) (\BNN_5 - 4 \BNN_4 + 6 \BNN_3 - 4 \BNN_2 + \BNN_1) \\
  &\quad + p^2 (6 \BNN_{1,2} - 20 \BNN_{2,2} + 22 \BNN_{3,2} - 8 \BNN_{4,2})
  + p^3 (6 \BNN_{1,2} - 12 \BNN_{2,2} + 6 \BNN_{3,2}) \\
  &\quad + p^4 (\tfrac{23}{5} \BNN_{1,4} - \tfrac{24}{5} \BNN_{2,4})
  + p^5 \BNN_{1,4} \smod{p^6} \\
  &\equiv -(\BNN_5 - 4 \BNN_4 + 6 \BNN_3 - 4 \BNN_2 + \BNN_1) \\
  &\quad + p^2 (-2 \BNN_{1,2} + 4 \BNN_{2,2} - 2 \BNN_{3,2})
  - \tfrac{1}{5} p^4 \BNN_{1,4} \smod{p^5}, \\
  \tfrac{1}{6} p^5 \, Q_p(6) &\equiv -(\BNN_6 - 5 \BNN_5 + 10 \BNN_4 - 10 \BNN_3 + 5 \BNN_2 - \BNN_1) \\
  &\quad + p^2 (\tfrac{10}{3} \BNN_{1,2} - 10 \BNN_{2,2} + 10 \BNN_{3,2} - \tfrac{10}{3} \BNN_{4,2})
  + p^4 (\BNN_{1,4} - \BNN_{2,4}) \smod{p^6}. 
\end{align*}
\end{theorem}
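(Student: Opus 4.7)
The plan is to start from the binomial expansion via $a^{p-1} = 1 + p\,q_p(a)$, which yields
\[
p^n Q_p(n) = \sum_{a=1}^{p-1} (a^{p-1}-1)^n = \sum_{k=0}^{n} \binom{n}{k}(-1)^{n-k} S_{k(p-1)},
\]
where $S_m = \sum_{a=1}^{p-1} a^m$. This reduces the task to a rational linear combination of the power sums $S_{k(p-1)}$ for $0 \le k \le n$, which I would expand via Faulhaber's formula
\[
S_m = \frac{1}{m+1}\sum_{j=0}^{m} \binom{m+1}{j} B_j\, p^{m+1-j},
\]
truncated to the target $p$-adic precision.

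The next step is to sort the surviving $B_j$ into three classes. The small-$j$ part ($j=0,1$ and a handful of small even $j$) supplies rational constants whose denominators are controlled by von Staudt--Clausen; these produce the leading factors such as $(p-1)$, the $-\tfrac12 p^m$ and $\tfrac{1}{12} m p^{m-1}$ pieces, and so on. The indices $j = n'(p-1)$ with $n' \geq 1$ produce $B_{n'(p-1)}$, which by \eqref{eq:bnn-1} equals $n'(p-1)\,\BNN_{n'} - \tfrac{1}{p} + 1$; this is the source of the $\BNN_{n'}$ blocks in the statement, the anomalous $-\tfrac1p+1$ being responsible for the pure $p$-power tails that appear most visibly in $Q_p(1)$. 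The indices $j = n'(p-1) - d$ with $d \in \{2,4\}$ furnish the auxiliary numbers $\BNN_{n',d}$ via \eqref{eq:bnn-2}. Everything else vanishes ($B_{2k+1}$ for $k \geq 1$) or sits below the precision threshold.

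After substitution, the coefficient attached to each $\BNN_{n'}$ or $\BNN_{n',d}$ becomes a double binomial sum of the shape
\[
\sum_{k} \binom{n}{k}(-1)^{n-k}\, \frac{p^{k(p-1)+1-j}}{k(p-1)+1}\binom{k(p-1)+1}{j}
\]
for $j$ near $n'(p-1)$ or $n'(p-1)-d$. The final step is to expand each $\binom{k(p-1)+1}{j}$ $p$-adically to the needed order and then collapse the result using the Kummer congruences $\BNN_{n'} \equiv \BNN_{m'} \smod{p^r}$ together with their $d$-shifted analogues, taken at the sharpest depth provided by \cite{Kellner:2025, Kellner:2025b}.

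The main obstacle is precision control rather than any fundamentally new ingredient: pushing from $\smod{p^4}$ in \cite{Kellner:2025b} to $\smod{p^6}$ here requires both sharper Kummer-type congruences among the $\BNN_{n'}$ and $\BNN_{n',d}$ and a correspondingly deeper $p$-adic expansion of $\binom{k(p-1)+1}{j}$ when $j$ is within $O(1)$ of $k(p-1)$. The bulk of the remaining work is systematic bookkeeping, verifying that the accumulated coefficients reproduce the stated forms for every $n \leq 6$; the two weaker $\smod{p^5}$ variants then follow by truncating the $\smod{p^6}$ forms and reapplying the Kummer congruences at the lower depth.
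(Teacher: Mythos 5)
Your route is essentially the paper's own: the identity $p^nQ_p(n)=\sum_k\binom{n}{k}(-1)^{n-k}S_{k(p-1)}(p)$ is exactly the finite-difference formula $Q_p(n)=\opsh_p^{n-1}\opdf{p-1}{n}\SH_\nu(p)\valueat{\nu=0}$ quoted from Part~II, your Faulhaber truncation is precisely the congruence $\SH_n(p)\equiv\BNP_n+p^2\binom{n}{3}\BNPD_{n-2}+p^4\binom{n}{5}\BNPD_{n-4}\smod{p^r}$ there, and the remaining work---$p$-adic expansion of the binomial coefficients followed by collapsing everything onto $\BNN_{1},\BNN_{1,2},\BNN_{1,4},\dots$ via the generalized Kummer congruences---is what Propositions \pref{prop:qp-bnp} and \pref{prop:qp-bnp-2} carry out explicitly. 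Since the theorem is an explicit numerical identity, the coefficient tables \emph{are} the proof, and your proposal defers all of that to ``systematic bookkeeping''; that is acceptable as a plan, but nothing in it certifies the stated constants.

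One concrete logical gap: you claim the $\smod{p^5}$ variants ``follow by truncating the $\smod{p^6}$ forms.'' The $\smod{p^6}$ congruences are only established for $p\geq 11$ (the depth of the generalized Kummer congruences in Proposition \pref{prop:gen-congr} degrades as $p$ shrinks relative to $r$ and $n/(p-1)$), whereas the $\smod{p^5}$ statements are asserted for $p\geq 7$. So the prime $p=7$ is not covered by truncation; it requires an independent run of the whole computation at depth $r=5$, which is exactly why the paper proves Proposition \pref{prop:qp-bnp} (for $p\geq 7$) separately from Proposition \pref{prop:qp-bnp-2} (for $p\geq 11$) rather than deducing one from the other. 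A minor further quibble: the $-\tfrac1p+1$ terms do not produce ``pure $p$-power tails''---they are absorbed wholesale into the definitions \eqref{eq:bnn-1} of $\BNN_n$ (equivalently $\BNP_{n(p-1)}$), and no $\BNN$-free tails appear in the final congruences.
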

\medskip

The rest of the paper is organized as follows. The next section introduces 
properties of the Bernoulli numbers. Section~\ref{sec:congr} establishes a 
link between congruences of power sums, $Q_p$, and the Bernoulli numbers.
Section~\ref{trans} provides some preparations for the proofs of the main 
theorems that are subsequently given in Sections~\ref{sec:proof} and 
\ref{sec:proof2}, respectively. The last Section~\ref{sec:concl} ends with 
a conclusion.


\newpage
\section{Preliminaries}

Let $\NN$ be the set of positive integers. Let $\ZZ_p$ be the ring of $p$-adic 
integers. Define the linear $p$-adic backward shift operator by
\[
  \opsh_p: p \ZZ_p \to \ZZ_p, \quad
  a = \sum_{\nu \geq 1} a_\nu p^\nu \mapsto \frac{a}{p} = \sum_{\nu \geq 0} a_{\nu+1} p^\nu.
\]
This operator does not truncate a $p$-adic expansion, since it is defined on $p \ZZ_p$.
Define the linear forward difference operator $\opdf{h}{}$ with step $h$
and its powers by
\[
  \opdf{h}{n} f(s) = \sum_{\nu=0}^n \binom{n}{\nu} (-1)^{n-\nu} f(s+\nu h)
\]
for integers $n \geq 0$, $h \geq 1$, and any function $f: \ZZ_p \to \ZZ_p$. 
We use the expression, for example, $\opdf{h}{n} f(s+t) \valueat{s = 1}$
to indicate the variable and an initial value when needed.

The Bernoulli polynomials $\BN_n(x)$ are defined by the generating function
\begin{equation} \label{eq:bnp-gf}
  \frac{t e^{xt}}{e^t - 1} = \sum_{n \geq 0} \BN_n(x) \frac{t^n}{n!} \quad (|t| < 2\pi)
\end{equation}
and they are explicitly given by 
\begin{equation} \label{eq:bnp-def}
  \BN_n(x) = \sum_{k=0}^{n} \binom{n}{k} \BN_{n-k} \, x^k \quad (n \geq 0),
\end{equation}
where $\BN_n = \BN_n(0) \in \QQ$ is the $n$th Bernoulli number (cf.~\cite[Chap.\,9]{Cohen:2007}).
Define the $p$-integral Bernoulli numbers for odd prime $p$ and $n \in 2\NN_0$ by
\[
  \BNP_n =
  \begin{cases}
    0, & \text{if $n = 0$}; \\
    \BN_n + \tfrac{1}{p} - 1, & \text{if $n > 0$ and $p-1 \mid n$}; \\
    \BN_n, & \text{otherwise}.
  \end{cases}
\]
Further define the $p$-integral divided Bernoulli numbers by
\begin{equation} \label{eq:bnpd-def}
  \BNPD_n = \frac{\BNP_n}{n} \quad (n \geq 1) \andq
  \BNPD_n = 0 \quad (n \leq 0).
\end{equation}
Carlitz~\cite{Carlitz:1953} examined the properties of the numbers $\BNPD_n$
in case ${p-1 \mid n}$. As a result, the generalized Kummer congruences can be 
described in a combined way, as follows.

\begin{prop}[\cite{Kellner:2025b}] \label{prop:gen-congr}
Let $n \in 2\NN$, $p \geq 5$ be a prime, and $r \geq 1$. Then
\[
  \opdf{p-1}{r} \, \BNPD_\nu \valueat{\nu = n} \equiv 0 \smod{p^r}
\]
holds for the following two conditions:
\begin{enumerate}
\item $p-1 \nmid n$ and $n > r$;
\item $p-1 \mid n$ and $p > r + n/(p-1)$.
\end{enumerate}
\end{prop}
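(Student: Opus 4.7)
The plan is to recast the iterated finite difference as a $p$-adically analytic quantity and then invoke the classical Kubota-Leopoldt interpolation, adjusted by the Carlitz correction in the exceptional case. Unfolding the definition,
\[
  \opdf{p-1}{r} \BNPD_\nu \valueat{\nu = n}
  = \sum_{k=0}^{r} \binom{r}{k} (-1)^{r-k} \BNPD_{n+k(p-1)}
  = \opdf{1}{r} g(s) \valueat{s = 0},
\]
where $g(s) := \BNPD_{n+s(p-1)}$, initially defined on $s \in \NN$. The goal is to show that $g$ extends to a $p$-adic analytic function $\ZZ_p \to \ZZ_p$; then its Mahler expansion $g(s) = \sum_{k \geq 0} a_k \binom{s}{k}$ satisfies $\opdf{1}{r} g(0) = a_r$, and appropriate analyticity estimates force $v_p(a_r) \geq r$ under the stated hypotheses.

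For the interpolation step I would handle the two cases separately. In case (1), $p-1 \nmid n$, and the $p$-adic interpolation of $\BN_m/m$ along $m \equiv n \smod{p-1}$ is the classical Kubota-Leopoldt theorem; the side condition $n > r$ keeps the expansion point far enough from the trivial zero for the $r$th Mahler coefficient to lie in $p^r \ZZ_p$. In case (2), $p-1 \mid n$, the corresponding $p$-adic zeta function has a simple pole, and it is precisely the Carlitz modification encoded in $\BNP_n = \BN_n + \tfrac{1}{p} - 1$ from the definition of $\BNPD_n$ in \eqref{eq:bnpd-def} that cancels it; the stronger hypothesis $p > r + n/(p-1)$ reflects the reduced region of convergence of the resulting power series.

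The main obstacle is the quantitative bookkeeping that converts this analytic input into the stated $p$-adic congruence. I would approach it in Kummer's original style, using Faulhaber's formula to express $\BN_m/m$ in terms of the power sum $\sum_{a=1}^{p-1} a^{m-1}$. After substitution, the alternating sum collapses to a linear combination of expressions of the form $\sum_{a=1}^{p-1} a^{n-1} (a^{p-1} - 1)^r$, up to a $p$-integral remainder. Since each factor $a^{p-1} - 1$ lies in $p \ZZ_p$ by Fermat's little theorem, this furnishes the required $p^r$. The hypotheses (1) and (2) are precisely what is needed to ensure that the auxiliary denominators produced by Faulhaber's identity and by the Carlitz correction remain prime to $p$; otherwise they would swallow the $p^r$ gain and the congruence would fail.
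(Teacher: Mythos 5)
First, a framing point: this paper states Proposition~\ref{prop:gen-congr} with a citation to \cite{Kellner:2025b} and contains no proof of it here, so your proposal can only be measured against the standard argument (Kummer's power-sum method with Carlitz's correction in the exceptional case, cf.\ \cite{Carlitz:1953}), which is indeed what your third paragraph points at. The problem is that both of your framings stop precisely where the work begins, and the one quantitative claim you do commit to would break the proof. In the Mahler/Kubota--Leopoldt framing, the assertion that ``appropriate analyticity estimates force $v_p(a_r)\ge r$'' \emph{is} the proposition: continuity gives no rate on Mahler coefficients, and even local analyticity only yields roughly $v_p(a_r)\gtrsim r/(p-1)$; the bound $v_p(a_r)\ge r$ requires Iwasawa-type integrality (the interpolating function must be of the form $F((1+p)^s-1)$ with $F\in\ZZ_p[[T]]$), which you do not establish, and even then what is interpolated is $(1-c^\nu)(1-p^{\nu-1})\BNPD_\nu$, not $\BNPD_\nu$ itself. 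In the elementary framing, ``up to a $p$-integral remainder'' is fatal: a merely $p$-integral error destroys a congruence modulo $p^r$. Concretely, for even $m\ge 4$ Faulhaber's formula gives
\[
  \frac{S_m(p)}{p} \;=\; \BN_m \;-\; \tfrac{1}{2}\,p^{m-1} \;+\; \frac{p^{m}}{m+1}
  \;+\; \sum_{i\ge 1}\binom{m}{2i+1}\frac{\BN_{m-2i}}{m-2i}\,p^{2i},
\]
and after taking $\opdf{p-1}{r}$ along $m=n+k(p-1)$ the correction terms of index $i$ are individually divisible only by $p^{2i}$; one must reapply the proposition with smaller $r$ to the shifted indices $m-2i$ to recover the missing powers of $p$. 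That inductive bootstrap is the actual content of the proof (it is exactly what the paper's expansion of $\SH_n(p)$ packages), and it is absent from your sketch. A smaller but concrete slip: your expression $\sum_{a=1}^{p-1}a^{n-1}(a^{p-1}-1)^r$ is $\opdf{p-1}{r}S_{\nu-1}(p)\valueat{\nu=n}$, whose leading Bernoulli contributions $\BN_{n-1+k(p-1)}$ have odd index and vanish, so as written it does not control $\BNPD_{n+k(p-1)}$ at all; obtaining the \emph{divided} numbers $\BN_m/m$ requires a different normalization.

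Your reading of the hypotheses is also only half right. For condition (2) it is on target: writing $n=d(p-1)$, the requirement $p>r+d$ is exactly $p\nmid n+k(p-1)$ for $0\le k\le r$, so all denominators in $\BNPD_{n+k(p-1)}$ stay prime to $p$. But for condition (1) the denominators are a non-issue --- von Staudt--Clausen makes $\BN_m/m$ automatically $p$-integral whenever $p-1\nmid m$ --- and ``distance from the trivial zero'' is not the mechanism either. The hypothesis $n>r$ pays for discarding the Euler factor: the quantity satisfying the clean congruence is $(1-p^{\nu-1})\BNPD_\nu$, and
\[
  \opdf{p-1}{r}\,p^{\nu-1}\BNPD_\nu\valueat{\nu=n}\equiv (-1)^r p^{\,n-1}\BNPD_n \smod{p^{\,n-2+p}}
\]
is in general nonzero modulo $p^r$ unless $n-1\ge r$. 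This is not cosmetic: for $p=5$, $n=r=2$ one computes $\BNPD_{10}-2\BNPD_6+\BNPD_2=\tfrac{115}{1386}\equiv 15\smod{25}$, which is divisible by $5$ but not by $25$, and the discrepancy is exactly $5\,\BNPD_2=\tfrac{5}{12}\equiv 15\smod{25}$. So the reason you give for hypothesis (1) would not have produced the correct hypothesis, and the congruence genuinely fails without it.
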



\section{Congruences of power sums}
\label{sec:congr}

Define the usual power sums by
\[
  S_n(m) = \sum_{\nu=1}^{m-1} \nu^n \quad (m, n \geq 0),
\]
where 
\begin{equation}\label{eq:sn-bn}
  S_n(x) = \frac{1}{n+1}(\BN_{n+1}(x) - \BN_{n+1}) \quad (n \geq 1). 
\end{equation}
Define the modified power sums by
\begin{equation} \label{eq:sh-def}
  \SH_n(x) = \frac{S_n(x) - S_0(x)}{x} \textq{where} \SH_0(p) = 0.
\end{equation}

\begin{prop}[\cite{Kellner:2025b}] 
For $n \geq 1$ and any prime $p$, we have 
\[
  Q_p(n) = \opsh_p^{n-1} \, \opdf{p-1}{n} \, \SH_\nu(p) \valueat{\nu = 0}.
\]
Let $r \in \set{5,6}$ and $p > r+1$ be a prime.
Let $1 \leq d \leq p$ and $n = d(p-1)$. Then we have
\[
  \SH_n(p) \equiv \BNP_n + p^2 \binom{n}{3} \BNPD_{n-2} + p^4 \binom{n}{5} \BNPD_{n-4} \smod{p^r}.
\]
\end{prop}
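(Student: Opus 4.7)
The plan is to expand $(a^{p-1}-1)^n$ by the binomial theorem and sum over $1 \leq a \leq p-1$:
\[
  p^n Q_p(n) = \sum_{\nu=0}^{n} \binom{n}{\nu}(-1)^{n-\nu} S_{\nu(p-1)}(p).
\]
Definition \eqref{eq:sh-def} yields $S_m(p) = p\,\SH_m(p) + S_0(p)$ for $m \geq 1$, and this also holds at $m=0$ by the convention $\SH_0(p) = 0$. Substituting and using $\sum_{\nu=0}^n \binom{n}{\nu}(-1)^{n-\nu} = 0$ for $n \geq 1$ to kill the constant $S_0(p) = p-1$, we obtain
\[
  p^{n-1} Q_p(n) = \sum_{\nu=0}^n \binom{n}{\nu}(-1)^{n-\nu} \SH_{\nu(p-1)}(p) = \opdf{p-1}{n}\SH_\nu(p)\valueat{\nu=0}.
\]
Because $q_p(a)$ is an integer by Fermat's little theorem, $Q_p(n) \in \ZZ$, so the right-hand side lies in $p^{n-1}\ZZ_p$ and we may apply $\opsh_p^{n-1}$ to recover $Q_p(n)$.

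\textbf{Expansion of $\SH_n(p)$.} I would combine \eqref{eq:sn-bn} with the Bernoulli polynomial formula \eqref{eq:bnp-def} and simplify $\frac{1}{n+1}\binom{n+1}{k} = \frac{1}{k}\binom{n}{k-1}$ to write
\[
  S_n(p) = \sum_{k=1}^{n+1} \frac{1}{k}\binom{n}{k-1} \BN_{n+1-k}\, p^k.
\]
Since $n = d(p-1)$ is even, $\BN_{n+1-k}$ vanishes unless $k$ is odd, apart from the single term $k = n$ producing $-p^n/2$. Under $p \geq 7$ both $p-1 \nmid n-2$ and $p-1 \nmid n-4$ hold, so $\BN_{n-2}/(n-2) = \BNPD_{n-2}$ and similarly $\BNPD_{n-4}$; the contributions from $k = 1, 3, 5$ are then
\[
  p\BN_n + p^3 \binom{n}{3} \BNPD_{n-2} + p^5 \binom{n}{5} \BNPD_{n-4}.
\]
Since $p-1 \mid n$, the definition of $\BNP_n$ converts $p\BN_n$ into $p\BNP_n + (p-1)$. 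Subtracting $S_0(p) = p - 1$ and dividing by $p$ yields the asserted three-term expression.

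\textbf{Main obstacle.} The delicate step is to verify that the tail of the expansion — all terms with $k$ odd and $k \geq 7$, together with $k = n$ — vanishes modulo $p^{r+1}$, so that dividing by $p$ gives the stated congruence modulo $p^r$. When $p-1 \nmid n+1-k$, the Bernoulli number $\BN_{n+1-k}$ is $p$-integral by von Staudt--Clausen and the factor $p^k$ with $p > r+1$ easily suffices. The awkward cases are $p-1 \mid n+1-k$ (forcing $v_p(\BN_{n+1-k}) = -1$) and $p \mid k$ in the denominator; each occurs only for a short list of indices $k$, and one checks case by case that a compensating factor of $p$ in $\binom{n}{k-1}$ — tracked by Kummer's theorem on carries in base $p$ — restores the required valuation $\geq r+1$ of the whole term. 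The isolated $k = n$ term contributes $-p^n/2$, which is negligible since $n \geq p-1 > r$.
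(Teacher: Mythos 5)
This proposition is imported verbatim from \cite{Kellner:2025b} and the present paper gives no proof of it, so there is nothing here to compare your argument against line by line; judged on its own, your proposal is correct and is surely the intended derivation. The operator identity is complete as written: expanding $(a^{p-1}-1)^n$, summing over $a$, replacing $S_{\nu(p-1)}(p)$ by $p\,\SH_{\nu(p-1)}(p)+S_0(p)$ (valid at $\nu=0$ by the convention $\SH_0(p)=0$), and killing $S_0(p)$ with the alternating binomial sum gives exactly $p^{n-1}Q_p(n)=\opdf{p-1}{n}\SH_\nu(p)\valueat{\nu=0}$, after which integrality of $Q_p(n)$ justifies applying $\opsh_p^{n-1}$. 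The expansion $S_n(p)=\sum_{k\ge1}\frac1k\binom{n}{k-1}\BN_{n+1-k}p^k$, the vanishing of even-$k$ terms (except $k=n,n+1$), and the identifications $\frac1k\binom{n}{k-1}\BN_{n+1-k}=\binom{n}{k}\BNPD_{n+1-k}$ for $k=3,5$ (using $p-1\nmid 2,4$ for $p\ge7$) are all right. The only place you wave your hands is the tail estimate, and it does close: writing $T_k=\frac1k\binom{n}{k-1}\BN_{n+1-k}p^k$ for odd $k$ with $7\le k\le n-1$, the conditions $p\mid k$ and $p-1\mid n+1-k$ force $k\equiv p\smod{p(p-1)}$, so apart from $k\ge p^2$ the only simultaneous failure is $k=p$, where $v_p(T_p)\ge p-2+v_p\bigl(\tbinom{n}{p-1}\bigr)$; for $r=6$ one has $p\ge 11$ and $p-2\ge 9$ already suffices, while for $r=5$, $p=7$ one needs the extra carry, which is always present since the last base-$p$ digits of $p-1$ and $(d-1)(p-1)$ sum to $2p-d\ge p$ for $2\le d\le p$, giving $v_p\bigl(\tbinom{d(p-1)}{p-1}\bigr)\ge1$. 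All remaining cases give $v_p(T_k)\ge\min(k,\,k-v_p(k)-1)\ge r+1$, and the isolated terms $k=n,n+1$ have valuation at least $p-1>r$. So the sketch is sound; spelling out this short case analysis would make it a complete proof.
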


\begin{corl}[\cite{Kellner:2025b}] \label{cor:qp-diff}
Let $r \in \set{5,6}$, $1 \leq n \leq r$, and $p > r+1$ be a prime. Then we have
\begin{align}
  \frac{1}{n} p^{n-1} Q_p(n) &\equiv (p-1) \opdf{p-1}{n-1} \, \BNPD_\nu \valueat{\nu = p-1} \nonumber \\
  &\quad + \frac{1}{n} p^2 \, \opdf{p-1}{n} \binom{\nu}{3} \BNPD_{\nu-2} \valueat{\nu = 0} \label{eq:qp-diff} \\
  &\quad + \frac{1}{n} p^4 \, \opdf{p-1}{n} \binom{\nu}{5} \BNPD_{\nu-4} \valueat{\nu = 0} \smod{p^r},
  \label{eq:qp-diff-2}
\end{align}
where the case ${\SH_0(p) = 0}$ is handled by ${\BNPD_{j} = 0}$ for ${j < 0}$.
\end{corl}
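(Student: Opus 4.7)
The plan is to start from the preceding proposition's two identities, unwind the backward shift $\opsh_p$ to pass from $Q_p(n)$ to a forward difference of $\SH_\nu(p)$, substitute the stated approximation of $\SH_\nu(p)$ pointwise at the $n+1$ nodes $\nu = d(p-1)$, and then reshape the leading term into the divided-Bernoulli form appearing in \eqref{eq:qp-diff}. Since $\opsh_p$ is division by $p$ on its domain, the first identity rearranges to $p^{n-1} Q_p(n) = \opdf{p-1}{n} \SH_\nu(p) \valueat{\nu = 0}$, and because $n \leq r < p$ the integer $n$ is a unit in $\ZZ_p$, so dividing by $n$ is harmless.

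Next I would apply the approximation $\SH_\nu(p) \equiv \BNP_\nu + p^2 \binom{\nu}{3} \BNPD_{\nu-2} + p^4 \binom{\nu}{5} \BNPD_{\nu-4} \smod{p^r}$ at each node. The node $d=0$ is consistent without invoking the proposition, since $\SH_0(p) = 0$ matches $\BNP_0 = 0$ and $\binom{0}{3} = \binom{0}{5} = 0$, under the convention $\BNPD_j = 0$ for $j < 0$. For $d = 1,\ldots,n$ the hypothesis $1 \leq d \leq p$ of the proposition holds because $d \leq n \leq r < p$. Summing these pointwise congruences against the binomial weights of $\opdf{p-1}{n}$ then gives
\[
\opdf{p-1}{n} \SH_\nu(p) \valueat{\nu=0} \equiv \opdf{p-1}{n} \BNP_\nu \valueat{\nu=0} + p^2 \opdf{p-1}{n} \binom{\nu}{3} \BNPD_{\nu-2} \valueat{\nu=0} + p^4 \opdf{p-1}{n} \binom{\nu}{5} \BNPD_{\nu-4} \valueat{\nu=0} \smod{p^r}.
\]

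The crucial algebraic step is then to rewrite $\tfrac{1}{n}\opdf{p-1}{n} \BNP_\nu \valueat{\nu = 0}$ in the form of the first term of \eqref{eq:qp-diff}. Using $\BNP_\nu = \nu \BNPD_\nu$ for $\nu \geq 1$ (and $\BNP_0 = 0$), a direct expansion together with the identity $k \binom{n}{k} = n \binom{n-1}{k-1}$ absorbs the prefactor $1/n$ and shifts the summation window by one, yielding
\[
\tfrac{1}{n} \opdf{p-1}{n} \BNP_\nu \valueat{\nu=0} = (p-1) \opdf{p-1}{n-1} \BNPD_\nu \valueat{\nu = p-1}.
\]
Dividing the remaining $p^2$- and $p^4$-pieces by $n$ then produces lines \eqref{eq:qp-diff} and \eqref{eq:qp-diff-2} verbatim.

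The main obstacle I anticipate is $p$-adic bookkeeping: confirming that $\opdf{p-1}{n} \SH_\nu(p) \valueat{\nu=0}$ really lies in $p^{n-1}\ZZ_p$, so that the backward shift $\opsh_p^{n-1}$ is defined, and that the pointwise congruences combine into a joint congruence of the same precision $p^r$ without absorbing any $p$-adic valuation through $n$ or the integer binomial coefficients $\binom{\nu}{3}$ and $\binom{\nu}{5}$. The hypothesis $p > r+1$ forces all these denominators to be units in $\ZZ_p$, so the modulus $p^r$ is preserved throughout, and the endpoint convention $\BNPD_j = 0$ for $j < 0$ handles the $d = 0$ node of each of the three forward differences uniformly.
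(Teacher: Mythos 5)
Your derivation is correct and is exactly the intended route: the corollary is quoted here from the companion paper, and it follows by combining the two parts of the preceding Proposition precisely as you do --- unwinding $\opsh_p^{n-1}$ to get $p^{n-1}Q_p(n)=\opdf{p-1}{n}\,\SH_\nu(p)\valueat{\nu=0}$, substituting the approximation at the nodes $\nu=d(p-1)$ (with the $d=0$ node handled by the stated conventions), and using $k\binom{n}{k}=n\binom{n-1}{k-1}$ together with $\BNP_{kh}=kh\,\BNPD_{kh}$ to convert $\tfrac{1}{n}\opdf{p-1}{n}\BNP_\nu\valueat{\nu=0}$ into $(p-1)\opdf{p-1}{n-1}\BNPD_\nu\valueat{\nu=p-1}$. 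The only remark worth adding is that the $p$-adic "obstacle" you flag is already dispensed with by hypothesis, since the Proposition's identity $Q_p(n)=\opsh_p^{n-1}\opdf{p-1}{n}\SH_\nu(p)\valueat{\nu=0}$ presupposes that the forward difference lies in $p^{n-1}\ZZ_p$.
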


\begin{lemma}[\cite{Kellner:2025b}] \label{lem:bin-diff}
For $k, n \geq 1$, and any prime $p > k$, we have 
\[
  \opdf{p-1}{n} \binom{\nu}{k} \valueat{\nu = 0}
  \equiv (-1)^k \binom{k-1}{n-1} \smod{p}.
\]
\end{lemma}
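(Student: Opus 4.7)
The plan is to expand the forward-difference operator, reduce each term modulo $p$, and recognise the remaining expression as a classical binomial difference. First I would unfold the definition to obtain
\[
\opdf{p-1}{n} \binom{\nu}{k} \valueat{\nu = 0}
= \sum_{j=0}^{n} (-1)^{n-j} \binom{n}{j} \binom{j(p-1)}{k}.
\]
Because $p > k$, the polynomial $\binom{x}{k} = x(x-1)\cdots(x-k+1)/k!$ has $p$-integral coefficients, so the substitution $x = j(p-1) \equiv -j \smod{p}$ is valid termwise. Applying the well-known identity $\binom{-j}{k} = (-1)^k \binom{j+k-1}{k}$ then turns each summand into $(-1)^k \binom{j+k-1}{k}$ modulo $p$.

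After pulling the sign $(-1)^k$ out of the sum, what remains is precisely the ordinary $n$-th forward difference (step $1$) of $g(j) = \binom{j+k-1}{k}$ evaluated at $j=0$. I would compute this by iterated Pascal's rule: one step gives $\opdf{1}{} g(j) = \binom{j+k-1}{k-1}$, and a straightforward induction yields $\opdf{1}{n} g(j) = \binom{j+k-1}{k-n}$ for $1 \le n \le k$, while the difference vanishes for $n > k$ (which matches $\binom{k-1}{n-1} = 0$ in that range). Setting $j=0$ gives $\binom{k-1}{k-n} = \binom{k-1}{n-1}$, which is exactly the claimed right-hand side.

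The only point requiring care is the termwise reduction of $\binom{j(p-1)}{k}$ modulo $p$, and this is precisely where the hypothesis $p > k$ enters, ensuring that $k!$ is a unit in $\ZZ_p$. Beyond that, the argument is a routine combinatorial manipulation, so I do not anticipate any serious obstacle.
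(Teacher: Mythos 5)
Your proof is correct and complete: the termwise reduction $\binom{j(p-1)}{k}\equiv\binom{-j}{k} \smod{p}$ (valid since $k!$ is a unit for $p>k$), the negation identity, and the iterated Pascal computation of the step-$1$ difference all check out, including the degenerate case $n>k$. Note that the paper itself imports this lemma from \cite{Kellner:2025b} without giving a proof, so there is no in-paper argument to compare against; your argument is the natural one and supplies the missing details faithfully.
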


\begin{prop} \label{prop:qp-bnp}
Let $1 \leq n \leq 5$ and $p \geq 7$ be a prime. Then we have
\begin{align*}
  \frac{1}{n} p^{n-1} Q_p(n) &\equiv (p-1) \opdf{p-1}{n-1} \, \BNPD_\nu \valueat{\nu = p-1} \\
  &\quad + \frac{1}{n} p^2 \mleft( \alpha_n \, \BNPD_{p-3}
  + \alpha'_n \, \BNPD_{2(p-1)-2} + \alpha''_n \, \BNPD_{3(p-1)-2} \mright) \\
  &\quad + \frac{1}{n} p^3 \mleft( \beta_n \, \BNPD_{p-3} + \beta'_n \, \BNPD_{2(p-1)-2} \mright) \\
  &\quad + \frac{1}{n} p^4 \mleft( \gamma_n \, \BNPD_{p-3}
  + \delta_n \, \BNPD_{p-5} \mright) \smod{p^5},
\end{align*}
where the coefficients are given by 
\smallskip
\begin{center} \small
\setstretch{1.2}
\begin{tabular}{r@{$\,=\,$}l|r@{$\,=\,$}l|r@{$\,=\,$}l}
\toprule
  $\alpha$ & $(-1,2,-3,-16,-10)$, &
  $\beta$ & $(\frac{11}{6},-\frac{11}{3},-18,-12,0)$, &
  $\gamma$ & $-(1,4,3,0,0)$, \\
  $\alpha'$ & $(0,-4,12,36,20)$, &
  $\beta'$ & $(0,\frac{26}{3},21,12,0)$, &
  $\delta$ & $-(1,4,6,4,1)$. \\
  $\alpha''$ & $-(0,0,10,20,10)$, & \multicolumn{1}{c}{} & \\
\bottomrule
\end{tabular}
\end{center} 
\end{prop}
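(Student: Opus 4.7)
The plan is to deduce the statement from Corollary~\ref{cor:qp-diff}, whose first term $(p-1)\opdf{p-1}{n-1}\BNPD_\nu\valueat{\nu=p-1}$ already appears verbatim as the opening line of the conclusion. What remains is to reduce the two forward differences in \eqref{eq:qp-diff} and \eqref{eq:qp-diff-2} into the claimed combinations of $\BNPD_{p-3}$, $\BNPD_{2(p-1)-2}$, $\BNPD_{3(p-1)-2}$, and $\BNPD_{p-5}$.

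For \eqref{eq:qp-diff} I would first expand the difference as a sum,
\[
\opdf{p-1}{n}\binom{\nu}{3}\BNPD_{\nu-2}\valueat{\nu=0}
= \sum_{j=1}^n \binom{n}{j}(-1)^{n-j}\binom{j(p-1)}{3}\,a_j,
\quad a_j := \BNPD_{j(p-1)-2},
\]
and then expand the binomial as a polynomial in $p$, $\binom{j(p-1)}{3} = \sum_{k=0}^{3} c_j^{(k)} p^k$, with explicit polynomial coefficients $c_j^{(k)}$ in $j$. After multiplying by $\tfrac{1}{n}p^2$ and absorbing the $p^5$-piece into the congruence, the total splits into three sums of orders $p^2$, $p^3$, $p^4$; the one of order $p^{2+k}$ is $T_k := \sum_{j=1}^n \binom{n}{j}(-1)^{n-j}\,c_j^{(k)}\,a_j$ and must be evaluated modulo $p^{3-k}$, for $k = 0, 1, 2$.

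The key reduction is to Newton-expand $a_j = \sum_{i=0}^{j-1}\binom{j-1}{i}\,\opdf{1}{i}a_1$ and invoke the Kummer-type vanishing $\opdf{1}{i} a_1 \equiv 0 \pmod{p^i}$, which is Proposition~\ref{prop:gen-congr} (case (1)) applied to $\BNPD_\nu$ at $\nu = p-3$, valid whenever $p - 3 > i$, hence for $p \geq 7$ and $i \leq 3$. The only delicate case is $\opdf{1}{4} a_1$ (needed for $n = 5$ at order $p^2$), where the required bound $\opdf{1}{4} a_1 \equiv 0 \pmod{p^3}$ follows from the identity $\opdf{1}{4} a_1 = \opdf{1}{3} a_2 - \opdf{1}{3} a_1$ combined with Proposition~\ref{prop:gen-congr} applied at $\nu = 2(p-1) - 2$. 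Because of the prefactor $p^{2+k}$, every $\opdf{1}{i} a_1$ with $i \geq 3-k$ contributes $\equiv 0 \pmod{p^5}$ and may be dropped. Thus only $a_1, \opdf{1}{1} a_1, \opdf{1}{2} a_1$ remain at order $p^2$; only $a_1, \opdf{1}{1} a_1$ at order $p^3$; and only $a_1$ at order $p^4$. Substituting $\opdf{1}{1} a_1 = a_2 - a_1$ and $\opdf{1}{2} a_1 = a_3 - 2 a_2 + a_1$ converts the answer back into a linear combination of $a_1, a_2, a_3$, yielding the coefficient triples $(\alpha_n, \alpha'_n, \alpha''_n)$, pairs $(\beta_n, \beta'_n)$, and scalars $\gamma_n$.

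For \eqref{eq:qp-diff-2}, accuracy modulo $p$ suffices because of the $p^4$ prefactor; the mod-$p$ Kummer congruence reduces every $\BNPD_{j(p-1)-4}$ to $\BNPD_{p-5}$, so the sum factors as $\BNPD_{p-5} \cdot \opdf{p-1}{n}\binom{\nu}{5}\valueat{\nu=0}$, and Lemma~\ref{lem:bin-diff} delivers $\delta_n = -\binom{4}{n-1}$, matching $-(1,4,6,4,1)_n$. The main obstacle is purely combinatorial bookkeeping: assembling the coefficient vectors $\alpha, \alpha', \alpha'', \beta, \beta', \gamma$ by evaluating the finite sums $\sum_j \binom{n}{j}(-1)^{n-j} c_j^{(k)} \binom{j-1}{i}$ for each relevant $(n,k,i)$ with $n \leq 5$ and verifying agreement with the stated table. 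There is no conceptual difficulty beyond tracking which Newton difference is killed by which power of $p$; the Kummer bounds in Proposition~\ref{prop:gen-congr} are exactly strong enough for $p \geq 7$.
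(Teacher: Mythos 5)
Your proposal is correct and follows essentially the same route as the paper: apply Corollary~\ref{cor:qp-diff} with $r=5$, expand the two forward differences, split by powers of $p$, reduce the $\binom{\nu}{5}$ piece modulo $p$ via Lemma~\ref{lem:bin-diff}, and use the generalized Kummer congruences of Proposition~\ref{prop:gen-congr} to collapse each $p^{2+k}$-piece onto $\BNPD_{j(p-1)-2}$ for $j\le 3-k$. Your Newton expansion about $j=1$ is just a systematic repackaging of the paper's direct elimination of $\bb_4,\bb_5$ (your identity $\opdf{1}{4}a_1=\opdf{1}{3}a_2-\opdf{1}{3}a_1$ corresponds to the paper's use of the third-order relation at base point $j=2$, which is the step needed for $p=7$), and the remaining work is the same finite coefficient bookkeeping the paper records in its tables.
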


\begin{proof}
We use Corollary~\ref{cor:qp-diff} for ${r=5}$, where $n \leq r < p$.
We first evaluate \eqref{eq:qp-diff-2}, and obtain by Kummer congruences and 
$\BNPD_{-4} = \binom{0}{5} = 0$ that
\[
  \opdf{p-1}{n} \binom{\nu}{5} \BNPD_{\nu-4} \valueat{\nu = 0}
  \equiv \opdf{p-1}{n} \binom{\nu}{5} \BNPD_{p-5} \valueat{\nu = 0} \smod{p}
\]
and
\[
  \delta_n \equiv \opdf{p-1}{n} \binom{\nu}{5} \valueat{\nu = 0} \smod{p},
\]
where $\delta = -(1,4,6,4,1)$ by Lemma~\ref{lem:bin-diff}.
For \eqref{eq:qp-diff}, we substitute $\bb_j = \BNPD_{j(p-1)-2}$ and write
\[
  \opdf{p-1}{n} \binom{\nu}{3} \BNPD_{\nu-2} \valueat{\nu = 0}
  \equiv \sum_{j=1}^{n} (s_{n,j} + t_{n,j} \, p + u_{n,j} \, p^2) \, \bb_j \smod{p^3}
\]
with some integer coefficients $s_{n,j}$, $t_{n,j}$ and $u_{n,j}$. 
Using the Kummer congruences, we compute
\[
  \sum_{j=1}^{n} u_{n,j} \, \bb_j \equiv \gamma_n \, \bb_1 \smod{p}
\]
with $\gamma = -(1,4,3,0,0)$. 
By the generalized Kummer congruences, we have for all $j \geq 1$ that
\begin{align*}
  \bb_j - 2 \bb_{j+1} + \bb_{j+2} &\equiv 0 \smod{p^2}, \\
  \bb_j - 3 \bb_{j+1} + 3 \bb_{j+2} - \bb_{j+3} &\equiv 0 \smod{p^3}.
\end{align*}
For the coefficients $s_{n,j}$, we compute the following 
expressions and their reduction$\smod{p^3}$:
\begin{center} \small
\begin{tabular}{cl}
\toprule
  $n=1$ & $- \bb_1 \smod{p^3}$ \\
  $n=2$ & $2 \bb_1 - 4 \bb_2 \smod{p^3}$ \\
  $n=3$ & $-3 \bb_1 + 12 \bb_2 - 10 \bb_3 \smod{p^3}$ \\
  $n=4$ & $4 \bb_1 - 24 \bb_2 + 40 \bb_3 - 20 \bb_4
  \equiv -16 \bb_1 + 36 \bb_2 -20 \bb_3 \smod{p^3}$ \\
  $n=5$ & $-5 \bb_1 + 40 \bb_2 - 100 \bb_3 + 100 \bb_4 - 35 \bb_5
  \equiv -10 \bb_1 + 20 \bb_2 - 10 \bb_3 \smod{p^3}$. \\
\bottomrule
\end{tabular}
\end{center} 
This gives $\alpha = (-1,2,-3,-16,-10)$, $\alpha' = (0,-4,12,36,20)$, 
and $\alpha'' = -(0,0,10,20,10)$. Similarly, we obtain for the coefficients 
$t_{n,j}$ that
\begin{center} \small
\setstretch{1.2}
\begin{tabular}{cl}
\toprule
  $n=1$ & $\frac{11}{6} \bb_1 \smod{p^2}$ \\
  $n=2$ & $-\frac{11}{3} \bb_1 + \frac{26}{3} \bb_2 \smod{p^2}$ \\
  $n=3$ & $\frac{11}{2} \bb_1 - 26 \bb_2 + \frac{47}{2} \bb_3
  \equiv -18 \bb_1 + 21 \bb_2 \smod{p^2}$ \\
  $n=4$ & $-\frac{22}{3} \bb_1 + 52 \bb_2 - 94 \bb_3 + \frac{148}{3} \bb_4
  \equiv -12 \bb_1 + 12 \bb_2 \smod{p^2}$ \\
  $n=5$ & $\frac{55}{6} \bb_1 - \frac{260}{3} \bb_2 + 235 \bb_3 - \frac{740}{3} \bb_4 + \frac{535}{6} \bb_5
  \equiv 0 \smod{p^2}$. \\
\bottomrule
\end{tabular}
\end{center} 
This yields $\beta = (\frac{11}{6},-\frac{11}{3},-18,-12,0)$ and 
$\beta' = (0,\frac{26}{3},21,12,0)$, completing the proof.
\end{proof}

\begin{prop} \label{prop:qp-bnp-2}
Let $1 \leq n \leq 6$ and $p \geq 11$ be a prime. Then we have
\begin{align*}
  \frac{1}{n} p^{n-1} Q_p(n) &\equiv (p-1) \opdf{p-1}{n-1} \, \BNPD_\nu \valueat{\nu = p-1} \\
  &\quad + \frac{1}{n} p^2 \mleft( \alpha_n \, \BNPD_{p-3}
  + \alpha'_n \, \BNPD_{2(p-1)-2} + \alpha''_n \, \BNPD_{3(p-1)-2} + \alpha'''_n \, \BNPD_{4(p-1)-2} \mright) \\
  &\quad + \frac{1}{n} p^3 \mleft( \beta_n \, \BNPD_{p-3} + \beta'_n \, \BNPD_{2(p-1)-2} + \beta''_n \, \BNPD_{3(p-1)-2} \mright) \\
  &\quad + \frac{1}{n} p^4 \mleft( \gamma_n \, \BNPD_{p-3} + \gamma'_n \, \BNPD_{2(p-1)-2}
  + \epsilon_n \, \BNPD_{p-5} + \epsilon'_n \BNPD_{2(p-1)-4} \mright) \\
  &\quad + \frac{1}{n} p^5 \mleft( \delta_n \, \BNPD_{p-3}
  + \eta_n \, \BNPD_{p-5} \mright) \smod{p^6},
\end{align*}
where the coefficients are given by 
\begin{center} \small
\setstretch{1.2}
\begin{tabular}{r@{$\,=\,$}l|r@{$\,=\,$}l}
\toprule
  $\alpha$ & $(-1,2,-3,4,30,20)$, &
  $\beta$ & $(\frac{11}{6},-\frac{11}{3},\frac{11}{2},42,30,0)$, \\
  $\alpha'$ & $(0,-4,12,-24,-100,-60)$, &
  $\beta'$ & $(0,\frac{26}{3},-26,-96,-60,0)$, \\
  $\alpha''$ & $(0,0,-10,40,110,60)$, &
  $\beta''$ & $(0,0,\frac{47}{2},54,30,0)$, \\
  $\alpha'''$ & $-(0,0,0,20,40,20)$, & \multicolumn{1}{c}{} \\
\bottomrule
\end{tabular}
\end{center} 
and
\begin{center} \small
\setstretch{1.2}
\begin{tabular}{r@{$\,=\,$}l|r@{$\,=\,$}l}
\toprule
  $\gamma$ & $(-1,2,15,12,0,0)$, &
  $\epsilon$ & $(-1,2,18,32,23,6)$, \\
  $\gamma'$ & $-(0,6,18,12,0,0)$, &
  $\epsilon'$ & $-(0,6,24,36,24,6)$, \\
  $\delta$ & $(\frac{1}{6},1,1,0,0,0)$, &
  $\eta$ & $(\frac{137}{60},\frac{77}{6},\frac{47}{2},18,5,0)$. \\
\bottomrule
\end{tabular}
\end{center} 
\end{prop}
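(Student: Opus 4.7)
The plan is to mirror the strategy of the proof of Proposition~\ref{prop:qp-bnp}, but to go one level deeper in every direction. We first invoke Corollary~\ref{cor:qp-diff} with $r=6$; the hypothesis $p > r+1$ forces $p \geq 11$ as in the statement. This splits $\tfrac{1}{n}p^{n-1}Q_p(n)$ into three pieces modulo $p^6$: the main forward-difference $(p-1)\opdf{p-1}{n-1}\BNPD_\nu$ at $\nu=p-1$, a $p^2$-piece coming from $\opdf{p-1}{n}\binom{\nu}{3}\BNPD_{\nu-2}|_{\nu=0}$, and a $p^4$-piece coming from $\opdf{p-1}{n}\binom{\nu}{5}\BNPD_{\nu-4}|_{\nu=0}$. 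The main piece is carried over unchanged; the remaining work is to expand the two other pieces one extra order in~$p$.

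For the $p^4$-piece we need reduction modulo $p^2$. Setting $\bb'_j = \BNPD_{j(p-1)-4}$ and $\bb_j = \BNPD_{j(p-1)-2}$ for convenience, we expand $\binom{\nu}{5}$ around $\nu = j(p-1)$ and write
\[
  \opdf{p-1}{n} \binom{\nu}{5} \BNPD_{\nu-4} \valueat{\nu = 0}
  \equiv \sum_{j\geq 1} (\sigma_{n,j} + \tau_{n,j}\, p)\, \bb'_j \smod{p^2}.
\]
The $\sigma$-row is reduced via the order-two Kummer relation $\bb'_j - 2\bb'_{j+1} + \bb'_{j+2} \equiv 0 \smod{p^2}$, which yields $\epsilon_n \bb'_1 + \epsilon'_n \bb'_2$; the $\tau$-row collapses $\smod{p}$ to $\eta_n \bb'_1$, contributing a $p^5\eta_n\BNPD_{p-5}$ term. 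Lemma~\ref{lem:bin-diff} pins down the leading coefficients, so $\sigma_{n,1} = -\binom{4}{n-1}$, which fixes the pattern in $\epsilon$.

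For the $p^2$-piece we need reduction modulo $p^4$. Expanding $\binom{\nu}{3}$ around $\nu = j(p-1)$ gives
\[
  \opdf{p-1}{n} \binom{\nu}{3} \BNPD_{\nu-2} \valueat{\nu = 0}
  \equiv \sum_{j\geq 1} \bigl(s_{n,j} + t_{n,j}\, p + u_{n,j}\, p^2 + v_{n,j}\, p^3\bigr) \bb_j \smod{p^4},
\]
where $s_{n,j},t_{n,j},u_{n,j},v_{n,j} \in \ZZ$ arise from the symbolic expansion of $\binom{\nu}{3}$ and of $\opdf{p-1}{n}$. The Kummer-type relations $\opdf{p-1}{k}\bb_\nu \equiv 0 \smod{p^k}$ from Proposition~\ref{prop:gen-congr} (applied for $k=2,3,4$, all of which are valid since $p\geq 11$) let us collapse each row onto the first few $\bb_j$'s: the $s$-row reduces modulo $p^4$ to yield $\alpha_n\bb_1 + \alpha'_n\bb_2 + \alpha''_n\bb_3 + \alpha'''_n\bb_4$; the $t$-row reduces modulo $p^3$ to $\beta_n\bb_1 + \beta'_n\bb_2 + \beta''_n\bb_3$; the $u$-row modulo $p^2$ to $\gamma_n\bb_1 + \gamma'_n\bb_2$; and the $v$-row modulo $p$ to $\delta_n\bb_1$. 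Each reduction is a short linear-algebra exercise: write the row as an integer vector $(c_1,\ldots,c_n)$ and add appropriate multiples of the Pascal-like relations $(1,-2,1,0,\ldots)$, $(1,-3,3,-1,0,\ldots)$, \ldots, to truncate the tail, which gives the tables stated.

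The main obstacle is purely bookkeeping. With six values of $n$, four levels of expansion in the $p^2$-piece, and two in the $p^4$-piece, the raw coefficient arrays become sizeable and rational denominators appear already at the $t$-row (since $\binom{\nu}{3}$ expansion contributes $\tfrac{11}{6}$-type constants). I would organize the proof exactly as in Proposition~\ref{prop:qp-bnp}: produce one compact table of raw $s_{n,j},t_{n,j},u_{n,j},v_{n,j}$ and $\sigma_{n,j},\tau_{n,j}$, then a companion table of reductions, and read off the asserted coefficient vectors $\alpha,\ldots,\eta$. Verifying a single row (say $n=6$) against Lemma~\ref{lem:bin-diff} for the leading $\bb_1$-coefficient serves as an independent sanity check on the whole computation.
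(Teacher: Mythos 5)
Your proposal follows essentially the same route as the paper: Corollary~\ref{cor:qp-diff} with $r=6$, separate $p$-adic expansions of the $\binom{\nu}{3}$- and $\binom{\nu}{5}$-pieces to orders $p^4$ and $p^2$ respectively, and row-by-row truncation via the generalized Kummer congruences of Proposition~\ref{prop:gen-congr}, exactly as the paper carries out. One small correction: Lemma~\ref{lem:bin-diff} does not give $\sigma_{n,1}=-\binom{4}{n-1}$ (the paper's raw leading coefficients are $(-1)^n n$); it only controls the full difference $\opdf{p-1}{n}\binom{\nu}{5}\valueat{\nu=0}$ modulo $p$, so the correct sanity check is $\epsilon_n+\epsilon'_n\equiv-\binom{4}{n-1}\smod{p}$ (which indeed holds for the stated tables), not a constraint on any single entry.
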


\begin{proof}
We use Corollary~\ref{cor:qp-diff} for ${r=6}$, where $n \leq r < p$.
We first evaluate \eqref{eq:qp-diff-2} and substitute $\bb_j = \BNPD_{j(p-1)-4}$. 
Note that $\BNPD_{-4} = \binom{0}{5} = 0$. We then obtain
\[
  \opdf{p-1}{n} \binom{\nu}{5} \BNPD_{\nu-4} \valueat{\nu = 0}
  \equiv \sum_{j=1}^{n} (d_{n,j} + e_{n,j} \, p) \, \bb_j \smod{p^2}
\]
with some integer coefficients $d_{n,j}$ and $e_{n,j}$. 
By the Kummer congruences, we determine that
\[
  \sum_{j=1}^{n} e_{n,j} \, \bb_j \equiv \eta_n \, \bb_1 \smod{p}
\]
with $\eta = (\frac{137}{60},\frac{77}{6},\frac{47}{2},18,5,0)$. 
By the generalized Kummer congruences, we have for all $j \geq 1$ 
the following cases (which we need later on) that
\begin{align*}
  \bb_j - 2 \bb_{j+1} + \bb_{j+2} &\equiv 0 \smod{p^2}, \\
  \bb_j - 3 \bb_{j+1} + 3 \bb_{j+2} - \bb_{j+3} &\equiv 0 \smod{p^3}, \\
  \bb_j - 4 \bb_{j+1} + 6 \bb_{j+2} - 4 \bb_{j+3} + \bb_{j+4} &\equiv 0 \smod{p^4}.
\end{align*}
For the coefficients $d_{n,j}$, we compute the following expressions and 
their reduction $\!\smod{p^2}$:
\begin{center} \small
\begin{tabular}{cl}
\toprule
  $n=1$ & $- \bb_1 \smod{p^2}$ \\
  $n=2$ & $2 \bb_1 - 6 \bb_2 \smod{p^2}$ \\
  $n=3$ & $-3 \bb_1 + 18 \bb_2 - 21 \bb_3 \equiv 18 \bb_1 - 24 \bb_2 \smod{p^2}$ \\
  $n=4$ & $4 \bb_1 - 36 \bb_2 + 84 \bb_3 - 56 \bb_4 \equiv 32 \bb_1 - 36 \bb_2 \smod{p^2}$ \\
  $n=5$ & $-5 \bb_1 + 60 \bb_2 - 210 \bb_3 + 280 \bb_4 - 126 \bb_5 \equiv 23 \bb_1 - 24 \bb_2 \smod{p^2}$ \\
  $n=6$ & $6 \bb_1 - 90 \bb_2 + 420 \bb_3 - 840 \bb_4 + 756 \bb_5 - 252 \bb_6 \equiv 6 \bb_1 - 6 \bb_2 \smod{p^2}$. \\
\bottomrule
\end{tabular}
\end{center} 
This yields $\epsilon=(-1,2,18,32,23,6)$ and $\epsilon'=-(0,6,24,36,24,6)$.

Next, we evaluate \eqref{eq:qp-diff} and substitute $\bb_j = \BNPD_{j(p-1)-2}$. 
With $\BNPD_{-2} = \binom{0}{3} = 0$, we obtain
\[
  \opdf{p-1}{n} \binom{\nu}{3} \BNPD_{\nu-2} \valueat{\nu = 0}
  \equiv \sum_{j=1}^{n} (s_{n,j} + t_{n,j} \, p + u_{n,j} \, p^2 + v_{n,j} \, p^3) \, \bb_j \smod{p^4}
\]
with some integer coefficients $s_{n,j}$, $t_{n,j}$, $u_{n,j}$ and $v_{n,j}$. 
Therefore, we have to spit the calculations into four cases.

Case $s_{n,j}$. We obtain the following expressions and their reduction$\smod{p^4}$:
\begin{center} \small
\begin{tabular}{cl}
\toprule
  $n=1$ & $- \bb_1 \smod{p^4}$ \\
  $n=2$ & $2 \bb_1 - 4 \bb_2 \smod{p^4}$ \\
  $n=3$ & $-3 \bb_1 + 12 \bb_2 - 10 \bb_3 \smod{p^4}$ \\
  $n=4$ & $4 \bb_1 - 24 \bb_2 + 40 \bb_3 - 20 \bb_4 \smod{p^4}$ \\
  $n=5$ & $-5 \bb_1 + 40 \bb_2 - 100 \bb_3 + 100 \bb_4 - 35 \bb_5
  \equiv 30 \bb_1 - 100 \bb_2 + 110 \bb_3 - 40 \bb_4 \smod{p^4}$ \\
  $n=6$ & $6 \bb_1 - 60 \bb_2 + 200 \bb_3 - 300 \bb_4 + 210 \bb_5 - 56 \bb_6
  \equiv 20 \bb_1 - 60 \bb_2 + 60 \bb_3 - 20 \bb_4 \smod{p^4}$. \\
\bottomrule
\end{tabular}
\end{center} 
We deduce that $\alpha = (-1,2,-3,4,30,20)$, $\alpha' = (0,-4,12,-24,-100,-60)$, \newline
$\alpha'' = (0,0,-10,40,110,60)$, and $\alpha''' = -(0,0,0,20,40,20)$.

For the coefficients $t_{n,j}$, it follows similarly that
\begin{center} \small
\setstretch{1.2}
\begin{tabular}{cl}
\toprule
  $n=1$ & $\frac{11}{6} \bb_1 \smod{p^3}$ \\
  $n=2$ & $-\frac{11}{3} \bb_1 + \frac{26}{3} \bb_2 \smod{p^3}$ \\
  $n=3$ & $\frac{11}{2} \bb_1 - 26 \bb_2 + \frac{47}{2} \bb_3 \smod{p^3}$ \\
  $n=4$ & $-\frac{22}{3} \bb_1 + 52 \bb_2 - 94 \bb_3 + \frac{148}{3} \bb_4
  \equiv 42 \bb_1 - 96 \bb_2 + 54 \bb_3 \smod{p^3}$ \\
  $n=5$ & $\frac{55}{6} \bb_1 - \frac{260}{3} \bb_2 + 235 \bb_3 - \frac{740}{3} \bb_4 + \frac{535}{6} \bb_5
  \equiv 30 \bb_1 - 60 \bb_2 + 30 \bb_3 \smod{p^3}$ \\
  $n=6$ & $-11 \bb_1 + 130 \bb_2 - 470 \bb_3 + 740 \bb_4 - 535 \bb_5 + 146 \bb_6 \equiv 0 \smod{p^3}$. \\
\bottomrule
\end{tabular}
\end{center} 
This defines $\beta = (\frac{11}{6},-\frac{11}{3},\frac{11}{2},42,30,0)$, 
$\beta' = (0,\frac{26}{3},-26,-96,-60,0)$, \newline
and $\beta'' = (0,0,\frac{47}{2},54,30,0)$.

For the coefficients $u_{n,j}$, we derive in a similar way that
\begin{center} \small
\setstretch{1.2}
\begin{tabular}{cl}
\toprule
  $n=1$ & $- \bb_1 \smod{p^2}$ \\
  $n=2$ & $2 \bb_1 - 6 \bb_2 \smod{p^2}$ \\
  $n=3$ & $-3 \bb_1 + 18 \bb_2 - 18 \bb_3
  \equiv 15 \bb_1 - 18 \bb_2 \smod{p^2}$ \\
  $n=4$ & $4 \bb_1 - 36 \bb_2 + 72 \bb_3 - 40 \bb_4
  \equiv 12 \bb_1 - 12 \bb_2 \smod{p^2}$ \\
  $n=5$ & $-5 \bb_1 + 60 \bb_2 - 180 \bb_3 + 200 \bb_4 - 75 \bb_5
  \equiv 0 \smod{p^2}$ \\
  $n=6$ & $6 \bb_1 - 90 \bb_2 + 360 \bb_3 - 600 \bb_4 + 450 \bb_5 - 126 \bb_6
  \equiv 0 \smod{p^2}$. \\
\bottomrule
\end{tabular}
\end{center} 
This implies that $\gamma = (-1,2,15,12,0,0)$ and $\gamma' = -(0,6,18,12,0,0)$.

Using the Kummer congruences, we finally compute
\[
  \sum_{j=1}^{n} v_{n,j} \, \bb_j \equiv \delta_n \, \bb_1 \smod{p}
\]
with $\delta = (\frac{1}{6},1,1,0,0,0)$. This completes the proof.
\end{proof}


\section{Transformations}
\label{trans}

The relationship between the Wilson quotient and the power sums $Q_p$ of the Fermat 
quotients is established by the following theorem.

\begin{theorem}[Kellner \cite{Kellner:2025}] \label{thm:kel}
We have the following statements:
\begin{enumerate}
\item There exist unique multivariate polynomials 
\[
  \psi_\nu(x_1,\ldots,x_\nu) \in \ZZ[x_1,\ldots,x_\nu] \quad (\nu \geq 1),
\]
which have no constant term and can be computed recursively; 

\item Let ${r \geq 1}$ and ${p > r}$ be an odd prime. Then we have 
\begin{align*}
  \WQ_p &\equiv \sum_{\nu=1}^{r} \frac{p^{\nu-1}}{\nu!} \, \psi_\nu(Q_p(1),\ldots,Q_p(\nu)) \smod{p^r} \\
\shortintertext{and equivalently}
  (p-1)! &\equiv -1 + \sum_{\nu=1}^{r} \frac{p^\nu}{\nu!} \, \psi_\nu(Q_p(1),\ldots,Q_p(\nu)) \smod{p^{r+1}}.
\end{align*}
\end{enumerate}
\end{theorem}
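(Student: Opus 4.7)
The plan is to derive a single master identity that expresses the Wilson quotient implicitly in terms of all the power sums $Q_p(n)$, and then to invert it $p$-adically; the polynomials $\psi_\nu$ arise as the coefficients of this inversion.

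The master identity is
\[
(1 - p\WQ_p)^{p-1} \;=\; \prod_{a=1}^{p-1}(1 + p\,q_p(a)),
\]
which follows immediately from $-(p-1)! = 1 - p\WQ_p$, the parity of $p-1$, and $a^{p-1} = 1 + p\,q_p(a)$. Both sides lie in $1 + p\ZZ_p$, so taking $p$-adic logarithms yields
\[
-(p-1)\sum_{n \geq 1}\frac{p^n \WQ_p^n}{n} \;=\; \sum_{n \geq 1}\frac{(-1)^{n-1}}{n}\,p^n Q_p(n),
\]
as an exact identity in $\ZZ_p$. Viewed as an implicit equation in $\WQ_p$, the leading coefficient of $\WQ_p$ on the left is $-(p-1)$, a unit in $\ZZ_p$; hence formal inversion, equivalently extracting $\WQ_p$ from $1 - p\WQ_p = \exp\bigl(\tfrac{1}{p-1}\sum_n \tfrac{(-1)^{n-1}}{n}p^n Q_p(n)\bigr)$ and expanding $\tfrac{1}{p-1} = -\sum_{j\geq 0} p^j$, produces a unique series expansion $\WQ_p = \sum_{\nu \geq 1} p^{\nu-1}\widetilde\psi_\nu(Q_p(1),\ldots,Q_p(\nu))$ with $\widetilde\psi_\nu$ a polynomial with $p$-integral rational coefficients. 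Set $\psi_\nu := \nu!\,\widetilde\psi_\nu$. Truncating modulo $p^r$ gives the first congruence of the theorem; the companion congruence for $(p-1)!$ follows from $(p-1)! = p\WQ_p - 1$.

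The polynomials are constructed by induction on $\nu$: the master equation modulo $p^\nu$ determines $\psi_\nu$ once $\psi_1,\ldots,\psi_{\nu-1}$ are known, which also proves uniqueness. That $\psi_\nu$ depends only on $Q_p(1),\ldots,Q_p(\nu)$ (and not on higher $Q_p(m)$) is a weight count: $Q_p(m)$ is multiplied by $p^m$ in the master equation, so it cannot contribute to the coefficient of $p^{\nu-1}$ unless $m \le \nu$. The no-constant-term condition is likewise clear: if every $Q_p(n) = 0$ then the right-hand side of the master equation vanishes, forcing $\WQ_p = 0$, so every monomial of $\psi_\nu$ contains at least one $x_m$.

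The main obstacle is the integrality assertion $\psi_\nu \in \ZZ[x_1,\ldots,x_\nu]$. The denominators introduced by the $\log$ series, the $\exp$ series, and the geometric series for $1/(p-1)$ are, a priori, products of terms like $n$ and $k!$; that all such denominators divide $\nu!$ is not obvious. I would prove this by extracting an explicit recursion for $\psi_\nu$ from the master equation and checking by induction that each new denominator that appears at stage $\nu$ divides $\nu!$, so that after the prefactor $\nu!$ is absorbed the result is integral. This careful bookkeeping of denominators is the central calculational task of the proof; everything else is essentially structural.
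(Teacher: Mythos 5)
First, a point of order: this paper does not actually prove Theorem~\ref{thm:kel} --- it is quoted from the first paper of the series \cite{Kellner:2025}, and no argument for it appears here --- so your attempt can only be judged on its own merits. On those merits, your strategy is sound and does produce the right objects: the identity $(1-p\WQ_p)^{p-1}=\prod_{a=1}^{p-1}(1+p\,q_p(a))$ is correct (both sides equal $((p-1)!)^{p-1}$ because $p-1$ is even), the $p$-adic logarithm applies since both sides lie in $1+p\ZZ_p$, and carrying out the inversion for $\nu\le 3$ reproduces $\psi_1=x_1$, $\psi_2=2x_1-x_1^2-x_2$, and $\psi_3$ exactly as in Table~\ref{tab:psi}. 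The formal weight count (treating $p$ as an indeterminate and the $Q_p(n)$ as variables $x_n$) also correctly shows that $\widetilde{\psi}_\nu$ involves only $x_1,\dots,x_\nu$, and the vanishing-at-the-origin argument is fine in that formal setting.

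Two genuine gaps remain. The first you name yourself: the integrality $\psi_\nu\in\ZZ[x_1,\dots,x_\nu]$, which is the entire substance of part~(1), is deferred to unspecified ``bookkeeping of denominators''; as written this is a plan, not a proof, and it is not a routine verification since denominators enter from three interleaved series. The second gap you do not flag, and it would cause a step to fail: your intermediate claim that every $\widetilde{\psi}_\nu$ has $p$-integral coefficients is false once $\nu\ge p$. Indeed, the linear part of $\widetilde{\psi}_\nu$ is $\sum_{m=1}^{\nu}\tfrac{(-1)^{m-1}}{m}x_m$ (visible in Table~\ref{tab:psi}: the linear part of $\psi_5$ is $5!\,(x_1-\tfrac12 x_2+\tfrac13 x_3-\tfrac14 x_4+\tfrac15 x_5)$), so $\widetilde{\psi}_p$ carries the non-$p$-integral coefficient $1/p$ on $x_p$. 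Consequently the tail term $p^{\nu-1}\widetilde{\psi}_\nu(Q_p(1),\dots,Q_p(\nu))$ at $\nu=p$ has $p$-adic valuation only at least $p-2$, and in the boundary case $p=r+1$ --- which the hypothesis $p>r$ permits --- this is $r-1$, not enough to discard modulo $p^r$. Your truncation therefore only establishes the congruence for $p\ge r+2$ (which, to be fair, covers every application made in the present paper). Rescuing $p=r+1$ requires the further observation that $\psi_p(Q_p(1),\dots,Q_p(p))\equiv 0\smod{p}$, which follows from the pattern $\psi_p\equiv x_1^p-x_p\smod{p}$ combined with the Frobenius congruence $Q_p(1)^p\equiv Q_p(p)\smod{p}$; neither ingredient appears in your proposal, and both would have to be proved from your recursion.
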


The first few polynomials $\psi_\nu$ are given below in Table~\ref{tab:psi}, 
which were computed in \cite{Kellner:2025}.

\begin{table}[H] \small
\setstretch{1.25}
\begin{center}
\begin{tabular}{r@{\;}c@{\;}l}
  \toprule
  $\psi_1$ & $=$ & $x_1$ \\
  $\psi_2$ & $=$ & $2 x_1 - x_1^2 - x_2$ \\
  $\psi_3$ & $=$ & $6 x_1 - 6 x_1^2 + x_1^3 + 3 x_1 x_2 - 3 x_2 + 2 x_3$ \\
  $\psi_4$ & $=$ & $24 x_1 - 36 x_1^2 + 12 x_1^3 - x_1^4 - 6 x_1^2 x_2 + 24 x_1 x_2 - 8 x_1 x_3 - 12 x_2 - 3 x_2^2 + 8 x_3 - 6 x_4$ \\
  $\psi_5$ & $=$ & $120 x_1 - 240 x_1^2 + 120 x_1^3 - 20 x_1^4 + x_1^5 + 10 x_1^3 x_2 - 90 x_1^2 x_2 + 20 x_1^2 x_3 + 180 x_1 x_2$ \\
  & & ${} + 15 x_1 x_2^2 - 80 x_1 x_3 + 30 x_1 x_4 - 60 x_2 - 30 x_2^2 + 20 x_2 x_3 + 40 x_3 - 30 x_4 + 24 x_5$ \\
  $\psi_6$ & $=$ & $720 x_1 - 1800 x_1^2 + 1200 x_1^3 - 300 x_1^4 + 30 x_1^5 - x_1^6
  - 15 x_1^4 x_2 + 240 x_1^3 x_2 - 40 x_1^3 x_3$ \\
  & & ${} - 1080 x_1^2 x_2 - 45 x_1^2 x_2^2 + 360 x_1^2 x_3 - 90 x_1^2 x_4 + 1440 x_1 x_2 + 270 x_1 x_2^2 - 120 x_1 x_2 x_3$ \\
  & & ${} - 720 x_1 x_3 + 360 x_1 x_4 - 144 x_1 x_5 - 360 x_2 - 270 x_2^2 - 15 x_2^3 + 240 x_2 x_3 - 90 x_2 x_4$ \\
  & & ${} + 240 x_3 - 40 x_3^2 - 180 x_4 + 144 x_5 - 120 x_6$ \\
  \bottomrule
\end{tabular}

\caption{First few polynomials $\psi_\nu$.}
\label{tab:psi}
\end{center}
\end{table}

In view of Corollary~\ref{cor:qp-diff}, we define for $n \geq 1$ and any odd prime 
$p$ the substitutions
\[
  Q_n = Q_p(n), \quad \QT_n = \frac{ p^{n-1}}{n} Q_p(n),
\]
and the polynomials
\[
  \PT_n(\QT_1,\ldots,\QT_n) =
  \frac{p^{n-1}}{n!} \, \psi_n(Q_1,\ldots,Q_n).
\]
We obtain a reformulation of Theorem~\ref{thm:kel}, as follows.

\begin{theorem} \label{thm:kel2}
Let ${r \geq 1}$ and ${p > r}$ be an odd prime. Then we have 
\begin{align*}
  \WQ_p &\equiv \sum_{\nu=1}^{r} \PT_\nu(\QT_1,\ldots,\QT_\nu) \smod{p^r} \\
\shortintertext{and equivalently}
  (p-1)! &\equiv -1 + p \sum_{\nu=1}^{r} \PT_\nu(\QT_1,\ldots,\QT_\nu) \smod{p^{r+1}}.
\end{align*}
\end{theorem}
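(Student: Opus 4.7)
The plan is to verify that Theorem~\ref{thm:kel2} is essentially a notational reformulation of Theorem~\ref{thm:kel} (Kellner \cite{Kellner:2025}), so the proof reduces to unwinding the definitions of $\PT_\nu$ and $\QT_n$ and substituting them into the established congruences.

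First, I would observe that by the very definition
\[
  \PT_\nu(\QT_1,\ldots,\QT_\nu) = \frac{p^{\nu-1}}{\nu!} \, \psi_\nu(Q_p(1),\ldots,Q_p(\nu)),
\]
summing over $\nu = 1,\ldots,r$ produces term by term the right-hand side of the congruence in part (2) of Theorem~\ref{thm:kel}. Hence the first congruence, for $\WQ_p$ modulo $p^r$, is immediate.

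For the second congruence, on $(p-1)!$ modulo $p^{r+1}$, I would multiply the Wilson-quotient congruence by $p$ (which upgrades the modulus from $p^r$ to $p^{r+1}$) and apply the defining relation $p \WQ_p = (p-1)! + 1$, rearranging to obtain the stated expression for $(p-1)!$.

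There is essentially no technical obstacle: the statement is a pure change of variables. The only subtlety worth remarking on is that the notation $\PT_\nu(\QT_1,\ldots,\QT_\nu)$ implicitly claims that, after substituting $Q_p(n) = \frac{n}{p^{n-1}} \QT_n$, no negative powers of $p$ survive, so that $\PT_\nu$ is genuinely a polynomial in $\QT_1,\ldots,\QT_\nu$ with rational coefficients. This holds because each monomial $c\,x_1^{a_1} \cdots x_\nu^{a_\nu}$ appearing in $\psi_\nu$ satisfies the weight bound $\sum_i (i-1) a_i \leq \nu - 1$, a structural property that is visible in Table~\ref{tab:psi} and is established in the recursive construction of the $\psi_\nu$ in \cite{Kellner:2025}. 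This point is not strictly needed for the congruences themselves, but it justifies the functional notation used in the statement and makes the reformulation useful for the subsequent computations in Sections~\ref{sec:proof} and~\ref{sec:proof2}.
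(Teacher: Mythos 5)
Your proposal is correct and matches the paper's treatment: the paper gives no separate proof, presenting Theorem~\ref{thm:kel2} as an immediate reformulation of Theorem~\ref{thm:kel} via the definitions of $\QT_n$ and $\PT_\nu$, exactly as you argue. Your added remark on why $\PT_\nu$ is a genuine polynomial in the $\QT_i$ (the weight bound on monomials of $\psi_\nu$) is a reasonable clarification consistent with Table~\ref{tab:psi2}, but is not part of the paper's argument.
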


The polynomials $\PT_\nu$, which are needed for later calculations, 
are listed in Table~\ref{tab:psi2} below.

\begin{table}[H] \small
\setstretch{1.2}
\begin{center}
\begin{tabular}{r@{\;}c@{\;}l}
  \toprule
  $\PT_1$ & $=$ & $x_1$ \\
  $\PT_2$ & $=$ & $p \, (x_1 - \tfrac{1}{2} x_1^2) - x_2$ \\
  $\PT_3$ & $=$ & $p^2 (x_1 -x_1^2 + \tfrac{1}{6} x_1^3)$ \\
  & & ${} + p \, (x_1 x_2 - x_2) + x_3$ \\
  $\PT_4$ & $=$ & $p^3 (x_1 - \tfrac{3}{2} x_1^2 + \tfrac{1}{2} x_1^3
  - \tfrac{1}{24} x_1^4)$ \\
  & & ${} + p^2 (2 x_1 x_2 - \tfrac{1}{2} x_1^2 x_2 - x_2)$ \\
  & & ${} + p \, (- \tfrac{1}{2} x_2^2 - x_1 x_3 + x_3) - x_4$ \\
  $\PT_5$ & $=$ & $p^4 (x_1 - 2 x_1^2 + x_1^3 - \tfrac{1}{6} x_1^4
  + \tfrac{1}{120} x_1^5)$ \\
  & & ${} + p^3 (3 x_1 x_2 - \tfrac{3}{2} x_1^2 x_2
  + \tfrac{1}{6} x_1^3 x_2 - x_2)$ \\
  & & ${} + p^2 (\tfrac{1}{2} x_1 x_2^2 - x_2^2 - 2 x_1 x_3
  + \tfrac{1}{2} x_1^2 x_3 + x_3)$ \\
  & & ${} + p \, (x_2 x_3 + x_1 x_4 - x_4) + x_5$ \\
  $\PT_6$ & $=$ & $p^5 (x_1 - \tfrac{5}{2} x_1^2 + \tfrac{5}{3} x_1^3 - \tfrac{5}{12} x_1^4
  + \tfrac{1}{24} x_1^5 - \tfrac{1}{720} x_1^6)$ \\
  & & ${} + p^4 (4 x_1 x_2 - 3 x_1^2 x_2 + \tfrac{2}{3} x_1^3 x_2 - \tfrac{1}{24} x_1^4 x_2 - x_2)$ \\
  & & ${} + p^3 (\tfrac{3}{2} x_1 x_2^2 - \tfrac{1}{4} x_1^2 x_2^2 - \tfrac{3}{2} x_2^2
  - 3 x_1 x_3 + \tfrac{3}{2} x_1^2 x_3 - \tfrac{1}{6} x_1^3 x_3 + x_3)$ \\
  & & ${} + p^2 (-x_1 x_2 x_3 + 2 x_2 x_3 - \tfrac{1}{6} x_2^3
  + 2 x_1 x_4 - \tfrac{1}{2} x_1^2 x_4 -x_4)$ \\
  & & ${} + p \, (-\tfrac{1}{2} x_3^2 - x_2 x_4 - x_1 x_5 + x_5) - x_6$ \\
  \bottomrule
\end{tabular}

\caption{First few polynomials $\PT_\nu$.}
\label{tab:psi2}
\end{center}
\end{table}

Recall the notation of \eqref{eq:bnn-1} and \eqref{eq:bnn-2}.
From \eqref{eq:bnpd-def}, it follows for $n \geq 1$ and any prime $p \geq 7$ that
\[
  \BNN_n = \BNPD_{n(p-1)} \andq \BNN_{n,d} = \BNPD_{n(p-1)-d} \textq{for} d \in \set{2,4}.
\]
These numbers lie in $\ZZ_p$ and satisfy the generalized Kummer congruences of 
Proposition~\ref{prop:gen-congr}. We rewrite Propositions~\ref{prop:qp-bnp} 
and \ref{prop:qp-bnp-2}, as follows.

\begin{lemma} \label{lem:qp-bnn}
Let $p \geq 7$ be a prime. Then we have
\begin{align*}
  \QT_1 &\equiv (p-1) \BNN_1
  - p^2 \, \BNN_{1,2}
  + \tfrac{11}{6} p^3 \, \BNN_{1,2}
  - p^4 (\BNN_{1,2} + \BNN_{1,4}) \smod{p^5}, \\
  \QT_2 &\equiv (p-1) (\BNN_2 - \BNN_1)
  + p^2 (\BNN_{1,2} - 2 \BNN_{2,2}) \\
  &\quad + p^3 (-\tfrac{11}{6} \BNN_{1,2} + \tfrac{13}{3} \BNN_{2,2})
  - p^4 (2 \BNN_{1,2} + 2 \BNN_{1,4}) \smod{p^5}, \\
  \QT_3 &\equiv (p-1) (\BNN_3 - 2 \BNN_2 + \BNN_1)
  + p^2 (-\BNN_{1,2} + 4 \BNN_{2,2} - \tfrac{10}{3} \BNN_{3,2}) \\
  &\quad + p^3 (-6 \BNN_{1,2} + 7 \BNN_{2,2})
  - p^4 (\BNN_{1,2} + 2 \BNN_{1,4}) \smod{p^5}, \displaybreak \\
  \QT_4 &\equiv (p-1) (\BNN_4 - 3 \BNN_3 + 3 \BNN_2 - \BNN_1)
  + p^2 (-4 \BNN_{1,2} + 9 \BNN_{2,2} - 5 \BNN_{3,2}) \\
  &\quad + p^3 (-3 \BNN_{1,2} + 3 \BNN_{2,2})
  - p^4 \BNN_{1,4} \smod{p^5}, \\
  \QT_5 &\equiv -(\BNN_5 - 4 \BNN_4 + 6 \BNN_3 - 4 \BNN_2 + \BNN_1) \\
  &\quad + p^2 (-2 \BNN_{1,2} + 4 \BNN_{2,2} - 2 \BNN_{3,2})
  - \tfrac{1}{5} p^4 \BNN_{1,4} \smod{p^5}. 
\end{align*}
\end{lemma}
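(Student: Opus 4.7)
The plan is to obtain the lemma by directly translating Proposition \ref{prop:qp-bnp} from $\BNPD$-notation into $\BNN$-notation. For each $n \in \set{1,\ldots,5}$, the proposition expresses $\QT_n = \tfrac{1}{n}p^{n-1} Q_p(n)$ modulo $p^5$ as the sum of a leading forward-difference term $(p-1) \opdf{p-1}{n-1} \BNPD_\nu \valueat{\nu = p-1}$ together with three $p$-power layers at $p^2$, $p^3$, $p^4$ involving the tabulated coefficients $\alpha, \alpha', \alpha'', \beta, \beta', \gamma, \delta$. Using the identifications $\BNN_j = \BNPD_{j(p-1)}$, $\BNN_{j,2} = \BNPD_{j(p-1)-2}$, and $\BNN_{j,4} = \BNPD_{j(p-1)-4}$, every $\BNPD$ argument in those layers becomes one of $\BNN_{1,2}$, $\BNN_{2,2}$, $\BNN_{3,2}$, $\BNN_{1,4}$.

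Expanding the leading difference operator gives
\[
  (p-1) \opdf{p-1}{n-1} \BNPD_\nu \valueat{\nu = p-1}
  = (p-1) \sum_{k=0}^{n-1} \binom{n-1}{k} (-1)^{n-1-k} \BNN_{k+1},
\]
which for $n = 1, 2, 3, 4$ is exactly the binomial sum appearing in the lemma. The case $n = 5$ is slightly different: by Proposition \ref{prop:gen-congr} in case (2) with $r = 4$ (valid since $p \geq 7 > 4 + 1$), the difference $\opdf{p-1}{4} \BNPD_\nu \valueat{\nu = p-1}$ already lies in $p^4 \ZZ_p$, so the prefactor $(p-1)$ collapses to $-1$ modulo $p^5$. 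This accounts for the sign flip and the absence of the $(p-1)$-prefactor in the stated formula for $\QT_5$.

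What remains is bookkeeping: I would divide the tabulated coefficient vectors by $n$ and assemble each $p^k$-layer entry by entry. For instance, for $n = 2$ the $p^2$ layer becomes $\tfrac{1}{2}(2 \BNN_{1,2} - 4 \BNN_{2,2}) = \BNN_{1,2} - 2 \BNN_{2,2}$, and the $p^4$ layer becomes $\tfrac{1}{2}(-4 \BNN_{1,2} - 4 \BNN_{1,4}) = -2 \BNN_{1,2} - 2 \BNN_{1,4}$; the other rows follow similarly. No new analytic input is needed beyond Proposition \ref{prop:qp-bnp} and the generalized Kummer congruences, so the only real obstacle is keeping the arithmetic straight across all five values of $n$ and all four layers, with the sole conceptual point being the collapse of the $(p-1)$-prefactor in the $n=5$ case described above.
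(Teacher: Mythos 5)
Your proposal is correct and matches the paper's own (very short) proof: the lemma is obtained by translating Proposition~\ref{prop:qp-bnp} into the $\BNN$-notation, dividing the tabulated coefficients by $n$, and for $\QT_5$ replacing the prefactor $p-1$ by $-1$ because $\opdf{p-1}{4}\,\BNPD_\nu\valueat{\nu=p-1}\equiv 0 \smod{p^4}$ by Proposition~\ref{prop:gen-congr}. The arithmetic in your sample rows checks out, so no further comment is needed.
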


\begin{proof}
This follows from Proposition~\ref{prop:qp-bnp}. The congruence for $\QT_5$ 
can be reduced (i.e., the term $p-1$ is replaced by $-1$)
by the generalized Kummer congruences of Proposition~\ref{prop:gen-congr}.
\end{proof}

\begin{lemma} \label{lem:qp-bnn-2}
Let $p \geq 11$ be a prime. Then we have
\begin{align*}
  \QT_1 &\equiv (p-1) \BNN_1
  - p^2 \, \BNN_{1,2}
  + \tfrac{11}{6} p^3 \, \BNN_{1,2} \\
  &\quad - p^4 (\BNN_{1,2} + \BNN_{1,4})
  + p^5 (\tfrac{1}{6} \BNN_{1,2} + \tfrac{137}{60} \BNN_{1,4}) \smod{p^6}, \\
  \QT_2 &\equiv (p-1) (\BNN_2 - \BNN_1)
  + p^2 (\BNN_{1,2} - 2 \BNN_{2,2})
  + p^3 (-\tfrac{11}{6} \BNN_{1,2} + \tfrac{13}{3} \BNN_{2,2}) \\
  &\quad + p^4 (\BNN_{1,2} - 3 \BNN_{2,2} + \BNN_{1,4} - 3 \BNN_{2,4})
  + p^5 (\tfrac{1}{2} \BNN_{1,2} + \tfrac{77}{12} \BNN_{1,4}) \smod{p^6}, \\
  \QT_3 &\equiv (p-1) (\BNN_3 - 2 \BNN_2 + \BNN_1)
  + p^2 (-\BNN_{1,2} + 4 \BNN_{2,2} - \tfrac{10}{3} \BNN_{3,2}) \\
  &\quad + p^3 (\tfrac{11}{6} \BNN_{1,2} - \tfrac{26}{3} \BNN_{2,2} + \tfrac{47}{6} \BNN_{3,2})
  + p^4 (5 \BNN_{1,2} - 6 \BNN_{2,2} + 6 \BNN_{1,4} - 8 \BNN_{2,4}) \\
  &\quad + p^5 (\tfrac{1}{3} \BNN_{1,2} + \tfrac{47}{6} \BNN_{1,4}) \smod{p^6}, \\
  \QT_4 &\equiv (p-1) (\BNN_4 - 3 \BNN_3 + 3 \BNN_2 - \BNN_1)
  + p^2 (\BNN_{1,2} - 6 \BNN_{2,2} + 10 \BNN_{3,2} - 5 \BNN_{4,2}) \\
  &\quad + p^3 (\tfrac{21}{2} \BNN_{1,2} - 24 \BNN_{2,2} + \tfrac{27}{2} \BNN_{3,2})
  + p^4 (3 \BNN_{1,2} - 3 \BNN_{2,2} + 8 \BNN_{1,4} - 9 \BNN_{2,4}) \\
  &\quad + \tfrac{9}{2} p^5 \BNN_{1,4} \smod{p^6}, \\
  \QT_5 &\equiv (p-1) (\BNN_5 - 4 \BNN_4 + 6 \BNN_3 - 4 \BNN_2 + \BNN_1)
  + p^2 (6 \BNN_{1,2} - 20 \BNN_{2,2} + 22 \BNN_{3,2} - 8 \BNN_{4,2}) \\
  &\quad + p^3 (6 \BNN_{1,2} - 12 \BNN_{2,2} + 6 \BNN_{3,2})
  + p^4 (\tfrac{23}{5} \BNN_{1,4} - \tfrac{24}{5} \BNN_{2,4})
  + p^5 \BNN_{1,4} \smod{p^6}, \\
  \QT_6 &\equiv -(\BNN_6 - 5 \BNN_5 + 10 \BNN_4 - 10 \BNN_3 + 5 \BNN_2 - \BNN_1) \\
  &\quad + p^2 (\tfrac{10}{3} \BNN_{1,2} - 10 \BNN_{2,2} + 10 \BNN_{3,2} - \tfrac{10}{3} \BNN_{4,2})
  + p^4 (\BNN_{1,4} - \BNN_{2,4}) \smod{p^6}. 
\end{align*}
\end{lemma}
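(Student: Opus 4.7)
The plan is to translate Proposition~\ref{prop:qp-bnp-2} from the $\BNPD$-language into the $\BNN$-language via the identifications $\BNN_n = \BNPD_{n(p-1)}$ and $\BNN_{n,d} = \BNPD_{n(p-1)-d}$ for $d \in \set{2,4}$. First I would expand the $(p-1)$-prefactored finite difference as
\[
  \opdf{p-1}{n-1} \BNPD_\nu \valueat{\nu = p-1}
  = \sum_{k=0}^{n-1} \binom{n-1}{k} (-1)^{n-1-k} \BNN_{k+1},
\]
so that for $n = 1, \ldots, 6$ this leading contribution becomes exactly the alternating binomial combinations of the $\BNN_j$'s appearing as the $(p-1)$-prefactors in the six displayed congruences.

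Next I would substitute the tabulated vectors from Proposition~\ref{prop:qp-bnp-2} row by row, dividing each by $n$: the $\alpha, \alpha', \alpha'', \alpha'''$ rows supply the $p^2$-coefficients of the $\BNN_{j,2}$; the $\beta, \beta', \beta''$ rows the $p^3$-coefficients of the $\BNN_{j,2}$; the $\gamma, \gamma', \epsilon, \epsilon'$ rows the $p^4$-coefficients of the $\BNN_{j,2}$ and $\BNN_{j,4}$; and finally the $\delta, \eta$ rows the $p^5$-coefficients of $\BNN_{1,2}$ and $\BNN_{1,4}$. A line-by-line check then matches every rational coefficient displayed in the six congruences for $\QT_1, \ldots, \QT_6$; this is routine arithmetic and constitutes the bulk of the proof.

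The only step that is not pure substitution concerns $\QT_6$, where the displayed prefactor is $-1$ rather than $(p-1)$. To justify this I would apply Proposition~\ref{prop:gen-congr}(2) with $r = 5$ and starting index $\nu = p-1$: the hypothesis $p > r+1$ holds since $p \geq 11$, so the length-$5$ forward difference of $\BNPD_\nu$ at $\nu = p-1$ vanishes$\smod{p^5}$. Writing $S$ for this alternating $\BNN$-sum, we then have $(p-1) S = -S + pS \equiv -S \smod{p^6}$, yielding the stated reduction. The same trick fails for $\QT_5$ because the corresponding length-$4$ difference is only $\equiv 0 \smod{p^4}$, one order too weak to absorb the extra $p$ factor modulo $p^6$. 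The principal obstacle is therefore not conceptual but rather the careful bookkeeping of these six substitutions; no new ideas beyond Proposition~\ref{prop:qp-bnp-2} and the generalized Kummer congruences are required.
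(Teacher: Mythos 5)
Your proposal is correct and follows the same route as the paper: the paper's proof of this lemma is exactly the substitution of Proposition~\ref{prop:qp-bnp-2} (rewritten via $\BNN_n = \BNPD_{n(p-1)}$, $\BNN_{n,d} = \BNPD_{n(p-1)-d}$), with the prefactor $(p-1)$ replaced by $-1$ only for $\QT_6$ via the generalized Kummer congruences of Proposition~\ref{prop:gen-congr}. Your justification of that last reduction (the order-$5$ difference vanishes $\smod{p^5}$, so the extra $p$ is absorbed $\smod{p^6}$, whereas the order-$4$ difference for $\QT_5$ is one power short) is precisely the point the paper leaves implicit.
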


\begin{proof}
This follows from Proposition~\ref{prop:qp-bnp-2}. The congruence for $\QT_6$ 
is reduced by the generalized Kummer congruences of Proposition~\ref{prop:gen-congr}.
\end{proof}

\begin{proof}[Proof of Theorem~\ref{thm:main3}]
This follows from Lemmas~\ref{lem:qp-bnn} and \ref{lem:qp-bnn-2}.
\end{proof}


\section{Proof of Theorem~\ref{thm:main}}
\label{sec:proof}

We can apply Lemma~\ref{lem:qp-bnn} and \cite[Theorem~1.5]{Kellner:2025b} to 
substitute values of $\QT_\nu$ in different moduli. We use \textsl{Mathematica} 
to shorten lengthy calculations. The remaining terms are then simplified and 
reduced by applying the generalized Kummer congruences afterwards. 
This procedure shall eliminate several redundant terms. In particular,
we can manipulate terms by adding zero sums, e.g.,
\[
  \sum_{\nu} \lambda_\nu \, \mathcal{E}_\nu \equiv 0 \smod{p^r},
\]
where $r \geq 1$, $\lambda_\nu \in \ZZ_p$, and $\mathcal{E}_\nu \equiv 0 \smod{p^r}$ 
are expressions in terms of the divided Bernoulli numbers.

\begin{proof}[Proof of Theorem~\ref{thm:main}]
By Theorem~\ref{thm:kel2}, we have
\[
  \WQ_p \equiv \sum_{\nu=1}^{5} \PT_\nu(\QT_1,\ldots,\QT_\nu) \smod{p^5}
\]
with polynomials $\PT_\nu$ of Table~\ref{tab:psi2}. After substituting 
values of $\QT_\nu$ and rearranging of terms, we rewrite the sum as
\[
  \WQ_p \equiv \sum_{\nu=1}^{5} \omega_\nu \, p^{\nu-1} \smod{p^5}.
\]
The computation leads to the following terms that are further simplified.
We first obtain
\[
  \omega_1 \equiv -5 \BNN_1 + 10 \BNN_2 - 10 \BNN_3 + 5 \BNN_4 - \BNN_5 \smod{p^5}
\]
and
\begin{align*}
  \omega_2 &\equiv \BNN_1 (-5 \BNN_1 + 10 \BNN_2 - 5 \BNN_3 + \BNN_4)
  + \BNN_2 (-\tfrac{5}{2} \BNN_2 + \BNN_3) \\
  &\equiv -\tfrac{5}{2} \BNN_1^2 + \tfrac{15}{2} \BNN_2^2
  + \tfrac{5}{2} \BNN_3^2 + \BNN_1 \BNN_4 - 9 \BNN_2 \BNN_3 \smod{p^4},
\end{align*}
where the latter congruence follows from adding the expression
\[
  \tfrac{5}{2}(\BNN_1 - 2 \BNN_2 + \BNN_3)^2 \equiv 0 \smod{p^4}.
\]

Further, we have
\[
  \omega_3 \equiv \omega_{3,1} - \BNN_{1,2} + \BNN_{2,2} - \tfrac{1}{3} \BNN_{3,2} \smod{p^3}, 
\]
where
\begin{align*}
  \omega_{3,1} &\equiv \BNN_1 (3 \BNN_1 - 9 \BNN_2 + 5 \BNN_3 - \BNN_4)
  + \BNN_1^2 (-\tfrac{5}{3} \BNN_1 + \tfrac{5}{2} \BNN_2 - \tfrac{1}{2} \BNN_3) \\
  &\quad + \BNN_2 (-\tfrac{1}{2} \BNN_1 \BNN_2 + 4 \BNN_2 - 2 \BNN_3) \\
  &\equiv \BNN_1 (2 \BNN_1 - 6 \BNN_2 + 2 \BNN_3) + \cdots \smod{p^3}
\end{align*}
with the remaining terms left unchanged. Applying the expression
\[
  -2(\BNN_1 - \BNN_2)(\BNN_1 - 2 \BNN_2 + \BNN_3) \equiv 0 \smod{p^3}
\] 
yields
\[
  \omega_{3,1} \equiv -\tfrac{1}{2} \BNN_1 \BNN_2^2
  - \BNN_1^2 (\tfrac{5}{3} \BNN_1 - \tfrac{5}{2} \BNN_2 + \tfrac{1}{2} \BNN_3) \smod{p^3}. 
\]

Next, we have
\[
  \omega_4 \equiv -\tfrac{5}{24} \BNN_1^4 + \tfrac{1}{6} \BNN_1^3 \BNN_2
  + \omega_{4,1} + \omega_{4,2} \smod{p^2}, 
\]
where
\begin{align*}
  \omega_{4,1} &\equiv \tfrac{3}{2} \BNN_1^2 - 2 \BNN_1 \BNN_2 - \tfrac{1}{2} \BNN_2^2 + \BNN_2 \BNN_3
  + \BNN_1 (\tfrac{5}{2} \BNN_1^2 - 5 \BNN_1 \BNN_2 + \BNN_1 \BNN_3 + \tfrac{3}{2} \BNN_2^2) \\
  &\equiv (\tfrac{3}{2} \BNN_1^2 - 3 \BNN_1 \BNN_2 + \tfrac{3}{2} \BNN_2^2) (1 + \BNN_1) \\
  &\equiv \tfrac{3}{2} (\BNN_1 - \BNN_2)^2 (1 + \BNN_1) \\
  &\equiv 0 \smod{p^2} 
\end{align*} 
and
\begin{align*}
  \omega_{4,2} &\equiv \BNN_{1,2} (-\tfrac{10}{3} \BNN_1 + \tfrac{20}{3} \BNN_2 - 5 \BNN_3 + \BNN_4)
  + \BNN_{2,2} (\tfrac{5}{3} \BNN_1 - \tfrac{10}{3} \BNN_2 + 2 \BNN_3) \\
  &\equiv -\tfrac{1}{3} \BNN_{1,2} (\BNN_1 + \BNN_2) - \tfrac{1}{3} \BNN_{2,2} (\BNN_1 - 2 \BNN_2) \smod{p^2}.
\end{align*}
From adding 
\[
  -\tfrac{1}{3}(\BNN_{1,2}-\BNN_{2,2})(\BNN_1 - \BNN_2) \equiv 0 \smod{p^2},
\]
we then derive that
\[
  \omega_{4,2} \equiv -\tfrac{2}{3} \BNN_1 \BNN_{1,2} + \tfrac{1}{3} \BNN_2 \BNN_{2,2} \smod{p^2}.
\]

Finally, we get $21$ terms for $\omega_5$, where most of them are directly canceled 
out by the Kummer congruences. Therefore, we only write the remaining terms down as
\begin{equation} \label{eq:w5-p1}
  \omega_5 \equiv -\tfrac{1}{120} \BNN_1^5 - \tfrac{1}{6} \BNN_1^2 \BNN_{1,2} - \tfrac{1}{5} \BNN_{1,4} \smod{p}.
\end{equation}
This gives the result and completes the proof.
\end{proof}


\section{Proof of Theorem~\ref{thm:main2}}
\label{sec:proof2}

We use Lemma~\ref{lem:qp-bnn-2} together with Lemma~\ref{lem:qp-bnn} and 
\cite[Theorem~1.5]{Kellner:2025b} to substitute values of $\QT_\nu$ in different 
moduli. We proceed similarly as in the proof of Theorem~\ref{thm:main}.

\begin{proof}[Proof of Theorem~\ref{thm:main2}]
By Theorem~\ref{thm:kel2}, we have
\[
  \WQ_p \equiv \sum_{\nu=1}^{6} \PT_\nu(\QT_1,\ldots,\QT_\nu) \smod{p^6}
\]
with polynomials $\PT_\nu$ of Table~\ref{tab:psi2}. After substituting 
values of $\QT_\nu$ and rearranging of terms, we rewrite the sum as
\[
  \WQ_p \equiv \sum_{\nu=1}^{6} \omega_\nu \, p^{\nu-1} \smod{p^6}.
\]

We then derive
\[
  \omega_1 \equiv -6 \BNN_1 + 15 \BNN_2 - 20 \BNN_3 + 15 \BNN_4 - 6 \BNN_5 + \BNN_6 \smod{p^6}
\]
and
\begin{align*}
  \omega_2 &\equiv \BNN_1 (-\tfrac{15}{2} \BNN_1 + 20 \BNN_2 - 15 \BNN_3 + 6 \BNN_4 - \BNN_5)
  + \BNN_2 (-\tfrac{15}{2} \BNN_2 + 6 \BNN_3 - \BNN_4) - \tfrac{1}{2} \BNN_3^2 \\
  &\equiv \BNN_1 (-\tfrac{13}{2} \BNN_1 + 15 \BNN_2 - 9 \BNN_3 + 2 \BNN_4)
  + \BNN_2 (-\tfrac{7}{2} \BNN_2 + 3 \BNN_4 - \BNN_5) - \tfrac{1}{2} \BNN_3^2
  \smod{p^5}
\end{align*}
from applying the expression 
\[
  (\BNN_1 -\BNN_2)(\BNN_1 - 4 \BNN_2 + 6 \BNN_3 - 4 \BNN_4 + \BNN_5) \equiv 0 \smod{p^5}.
\]

Next, we have
\[
  \omega_3 \equiv \omega_{3,1} + \omega_{3,2} + \omega_{3,3} \smod{p^4}
\]
with
\begin{align*}
  \omega_{3,1} &\equiv \BNN_1 (4 \BNN_1 - 16 \BNN_2 + 14 \BNN_3 - 6 \BNN_4 + \BNN_5) \\
  &\quad + \BNN_2 (10 \BNN_2 - 10 \BNN_3 + 2 \BNN_4) + \BNN_3^2 \\
  &\equiv \BNN_1 (3 \BNN_1 - 12 \BNN_2 + 8 \BNN_3 - 2 \BNN_4) + \cdots \smod{p^4}, 
\end{align*}
where the remaining terms of $\omega_{3,1}$ are unchanged. By adding the expressions
\[
  -(\BNN_1 - 2 \BNN_2 + \BNN_3)^2 - 2 (\BNN_1 - \BNN_2)
  (\BNN_1 - 3 \BNN_2 + 3 \BNN_3 - \BNN_4) \equiv 0 \smod{p^4},
\]
we obtain
\[
  \omega_{3,1} \equiv 0 \smod{p^4}.
\]
The remaining parts are
\[
  \omega_{3,2} \equiv \BNN_1^2 (-\tfrac{10}{3} \BNN_1
  + \tfrac{15}{2} \BNN_2 - 3 \BNN_3 + \tfrac{1}{2} \BNN_4)
  + \BNN_2^2 (-3 \BNN_1 + \tfrac{1}{6} \BNN_2) + \BNN_1 \BNN_2 \BNN_3 \smod{p^4} 
\]
and
\[
  \omega_{3,3} \equiv -\tfrac{4}{3} \BNN_{1,2} + 2 \BNN_{2,2}
  - \tfrac{4}{3} \BNN_{3,2} + \tfrac{1}{3} \BNN_{4,2} \smod{p^4}.
\]

In case of $\omega_4$, we collect the following terms such that
\[
  \omega_4 \equiv \omega_{4,1} + \omega_{4,2} + \omega_{4,3} \smod{p^3}.
\]
For the first part, we have
\begin{align*}
  \omega_{4,1} &\equiv \BNN_1 (\tfrac{5}{2} \BNN_1 - 6 \BNN_2 + 2 \BNN_3)
  + \BNN_1^2 (\tfrac{9}{2} \BNN_1 - \tfrac{27}{2} \BNN_2 + 6 \BNN_3 - \BNN_4) \\
  &\quad + \BNN_2 (\BNN_2 + 2 \BNN_3 - \BNN_4 - 3 \BNN_1 \BNN_3)
  + \BNN_2^2 (\tfrac{15}{2} \BNN_1 - \tfrac{1}{2} \BNN_2) - \tfrac{1}{2} \BNN_3^2 \\
  &\equiv \BNN_1 (\tfrac{5}{2} \BNN_1 - 7 \BNN_2 + 2 \BNN_3)
  + \BNN_1^2 (\tfrac{7}{2} \BNN_1 - \tfrac{21}{2} \BNN_2 + 3 \BNN_3) \\
  &\quad + \BNN_2 (4 \BNN_2 - \BNN_3 - 3 \BNN_1 \BNN_3)
  + \BNN_2^2 (\tfrac{15}{2} \BNN_1 - \tfrac{1}{2} \BNN_2) - \tfrac{1}{2} \BNN_3^2 \smod{p^3}. 
\end{align*}
Adding the expressions 
\[
  \tfrac{1}{2} (\BNN_1 - 2 \BNN_2 + \BNN_3)^2 - \tfrac{1}{2} (\BNN_1 - \BNN_2)^3
  - 3 (\BNN_1 - \BNN_2) (\BNN_1 - 2 \BNN_2 + \BNN_3) (1 + \BNN_1) \equiv 0 \smod{p^3}
\]
yields
\[
  \omega_{4,1} \equiv 0 \smod{p^3}.
\]
The second part gives
\begin{align*}
  \omega_{4,2} &\equiv \BNN_1^3 (-\tfrac{5}{8} \BNN_1 + \BNN_2 - \tfrac{1}{6} \BNN_3)
  - \tfrac{1}{4} \BNN_1^2 \BNN_2^2 \smod{p^3}.
\end{align*}
The last part is
\begin{align*}
  \omega_{4,3} &\equiv \BNN_{1,2} (-6 \BNN_1 + 15 \BNN_2 - 15 \BNN_3 + 6 \BNN_4 - \BNN_5) \\
  &\quad + \BNN_{2,2} (6 \BNN_1 - 15 \BNN_2 + 12 \BNN_3 - 2 \BNN_4) \\
  &\quad + \BNN_{3,2} (-2 \BNN_1 + 5 \BNN_2 - \tfrac{10}{3} \BNN_3) \\
  &\equiv \BNN_{1,2} (-3 \BNN_1 + 5 \BNN_2 - 3 \BNN_3) \\
  &\quad + \BNN_{2,2} (4 \BNN_1 - 9 \BNN_2 + 6 \BNN_3) \\
  &\quad + \BNN_{3,2} (-2 \BNN_1 + 5 \BNN_2 - \tfrac{10}{3} \BNN_3) \smod{p^3}.
\end{align*}
Adding the expression 
\[
  \mleft(3 (\BNN_1 - 2 \BNN_2 + \BNN_3) - (\BNN_1 - \BNN_2) \mright)
  (\BNN_{1,2} - 2 \BNN_{2,2} + \BNN_{3,2}) \equiv 0 \smod{p^3}
\]
finally provides
\[
  \omega_{4,3} \equiv -\BNN_1 \BNN_{1,2} + \BNN_2 \BNN_{2,2} - \tfrac{1}{3} \BNN_3 \BNN_{3,2}
\]

For $\omega_5$, we obtain several terms that eventually vanish. Therefore, 
we group the corresponding terms in the following way such that
\begin{align*}
  \omega_5 &\equiv -\tfrac{1}{20} \BNN_1^5 + \tfrac{1}{24} \BNN_1^4 \BNN_2
  + \omega_{5,1} + \omega_{5,2} + \omega_{5,3}
  - \tfrac{2}{5} \BNN_{1,4} + \tfrac{1}{5} \BNN_{2,4} \smod{p^2}
\end{align*}
with $\omega_{5,1}$, $\omega_{5,2}$, and $\omega_{5,3}$ as given below.
It is easy to verify that
\begin{align*}
  \omega_{5,1} &\equiv \BNN_1 (\BNN_1 - \tfrac{9}{2} \BNN_2^2 + \BNN_1 \BNN_2^2)
  + \BNN_2 (- 2 \BNN_2 + \tfrac{1}{2} \BNN_2^2) \\
  &\quad + \BNN_1^2 (\tfrac{1}{2} \BNN_1 + 3 \BNN_2 - 3 \BNN_3 + \tfrac{1}{2} \BNN_4)
  + \BNN_1^3 (\tfrac{3}{2} \BNN_1 - 3 \BNN_2 + \tfrac{1}{2} \BNN_3) \\
  &\quad + \BNN_3 (-\BNN_1 + 2 \BNN_2 + 3 \BNN_1 \BNN_2) \\
  &\equiv \BNN_1 (\BNN_1 - \tfrac{9}{2} \BNN_2^2 + \BNN_1 \BNN_2^2)
  + \BNN_2 (- 2 \BNN_2 + \tfrac{1}{2} \BNN_2^2) \\
  &\quad + \BNN_1^2 (\tfrac{5}{2} \BNN_1 - \tfrac{3}{2} \BNN_2)
  + \BNN_1^3 (\BNN_1 - 2 \BNN_2) \\
  &\quad + (-\BNN_1 + 2 \BNN_2)(-\BNN_1 + 2 \BNN_2 + 3 \BNN_1 \BNN_2) \\
  &\equiv \tfrac{1}{2} (\BNN_1 - \BNN_2)^2 (4 + 5 \BNN_1 + 2 \BNN_1^2 + \BNN_2) \\
  &\equiv 0 \smod{p^2}.
\end{align*}
Further, we have
\begin{align*}
  \omega_{5,2} &\equiv \BNN_{1,2} (\tfrac{17}{6} \BNN_1 - 12 \BNN_2 + \tfrac{56}{3} \BNN_3 - 11 \BNN_4 + \tfrac{11}{6} \BNN_5) \\
  &\quad + \BNN_{2,2} (-5 \BNN_1 + \tfrac{49}{3} \BNN_2 - \tfrac{55}{3} \BNN_3 + \tfrac{19}{3} \BNN_4) \\
  &\quad + \BNN_{3,2} (2 \BNN_1 - 5 \BNN_2 + \tfrac{10}{3} \BNN_3) \\
  &\equiv \BNN_{1,2} (\tfrac{2}{3} \BNN_1 - \tfrac{1}{3} \BNN_2)
  + \BNN_{2,2} (\tfrac{2}{3} \BNN_1 - \tfrac{4}{3} \BNN_2) \\
  &\quad + (-\BNN_{1,2} + 2 \BNN_{2,2}) (-\tfrac{4}{3} \BNN_1 + \tfrac{5}{3} \BNN_2) \\
  &\equiv 2 (\BNN_{1,2} - \BNN_{2,2}) (\BNN_1 - \BNN_2) \\
  &\equiv 0 \smod{p^2}.
\end{align*}
It remains that
\begin{align*}
  \omega_{5,3} &\equiv \BNN_{1,2} (-5 \BNN_1^2 + \tfrac{35}{3} \BNN_1 \BNN_2
  - 6 \BNN_1 \BNN_3 + \BNN_1 \BNN_4 - 3 \BNN_2^2 + \BNN_2 \BNN_3) \\
  &\quad + \BNN_{2,2} (\tfrac{5}{2} \BNN_1^2 - \tfrac{16}{3} \BNN_1 \BNN_2 + 2 \BNN_1 \BNN_3 + \BNN_2^2) \\
  &\equiv \BNN_{1,2} (-\BNN_1^2 + \tfrac{5}{3} \BNN_1 \BNN_2 - \BNN_2^2)
  + \BNN_{2,2} (\tfrac{1}{2} \BNN_1^2 - \tfrac{4}{3} \BNN_1 \BNN_2 + \BNN_2^2) \\
  &\equiv -\tfrac{1}{3} \BNN_1 \BNN_2 \BNN_{1,2}
  - \tfrac{1}{2} \BNN_1^2 \BNN_{2,2} + \tfrac{2}{3} \BNN_1 \BNN_2 \BNN_{2,2} \smod{p^2},
\end{align*}
where the last congruence follows from adding the expression
\[
  (\BNN_{1,2} - \BNN_{2,2}) (\BNN_1 - \BNN_2)^2 \equiv 0 \smod{p^2}.
\]
As a result, this finally yields
\begin{equation} \label{eq:w5-p2}
\begin{aligned}
  \omega_5 &\equiv -\tfrac{1}{20} \BNN_1^5 + \tfrac{1}{24} \BNN_1^4 \BNN_2
  - \tfrac{1}{3} \BNN_1 \BNN_2 \BNN_{1,2} - \tfrac{1}{2} \BNN_1^2 \BNN_{2,2} \\
  &\quad + \tfrac{2}{3} \BNN_1 \BNN_2 \BNN_{2,2}
  - \tfrac{2}{5} \BNN_{1,4} + \tfrac{1}{5} \BNN_{2,4} \smod{p^2}.
\end{aligned}
\end{equation}

At the end, we collect $48$ terms for $\omega_6$, where almost all terms coincide 
or vanish by the Kummer congruences. Only $4$ terms remain such that
\begin{align*}
  \omega_6 &\equiv -\tfrac{1}{720} \BNN_1^6 - \tfrac{1}{18} \BNN_1^3 \BNN_{1,2}
  - \tfrac{1}{18} \BNN_{1,2}^2 - \tfrac{1}{5} \BNN_1 \BNN_{1,4} \smod{p}.
\end{align*}
This implies the result, completing the proof.
\end{proof}


\section{Conclusion}
\label{sec:concl}

Let $r \geq 1$ and $p > r$ be an odd prime. Then we have
\begin{align*}
  \WQ_p &\equiv \sum_{\nu=1}^{r} \omega_\nu \, p^{\nu-1} \smod{p^r} \\
\shortintertext{and equivalently}
  (p-1)! &\equiv \sum_{\nu=0}^{r} \omega_\nu \, p^\nu \smod{p^{r+1}}
\end{align*}
with $\omega_0 = -1$ and some coefficients $\omega_\nu \in \ZZ_p$.

It turns out that the computation of the coefficients $\omega_\nu$ for increasing 
$r$ becomes more and more difficult compared with the relatively simple cases for 
$r \in \set{1,2,3,4}$, as computed in~\cite{Kellner:2025b}. On the one side, 
the number of terms of the polynomials $\psi_\nu$ of Theorem~\ref{thm:kel}
increases exponentially for increasing $\nu$, as discussed in \cite{Kellner:2025}.
On the other side, the congruences of the power sums $Q_p$ of the Fermat quotients 
have to be computed in higher moduli for increasing~$r$ and so the number of their 
terms also increases.

For example, we consider the computation of $\omega_5 \smod{p^2}$, 
see \eqref{eq:w5-p2} and above in the proof of Theorem~\ref{thm:main2}.
Several terms of the computation of $\omega_5 \smod{p^2}$ eventually vanish 
by applying the generalized Kummer congruences. As a hint which terms should vanish, 
one can look at the reduction $\omega_5 \smod{p}$, as computed in \eqref{eq:w5-p1}.
Table~\ref{tab:red} below describes the related terms of this reduction from 
modulo $p^2$ to $p$ for $p \geq 7$.

\begin{table}[H] \small
\setstretch{1.5}
\begin{center}
\begin{tabular}{c@{\;\;}c@{\;}c}
  \toprule
  $\omega_5 \smod{p^2}$ && $\omega_5 \smod{p}$ \\
  \midrule
  $-\tfrac{1}{20} \BNN_1^5 + \tfrac{1}{24} \BNN_1^4 \BNN_2$ & $\mapsto$ & $-\tfrac{1}{120} \BNN_1^5$ \\
  $-\tfrac{1}{3} \BNN_1 \BNN_2 \BNN_{1,2} - \tfrac{1}{2} \BNN_1^2 \BNN_{2,2}
  + \tfrac{2}{3} \BNN_1 \BNN_2 \BNN_{2,2}$ & $\mapsto$ & $-\tfrac{1}{6} \BNN_1^2 \BNN_{1,2}$ \\
  $-\tfrac{2}{5} \BNN_{1,4} + \tfrac{1}{5} \BNN_{2,4}$ & $\mapsto$ & $-\tfrac{1}{5} \BNN_{1,4}$ \\
  \bottomrule
\end{tabular}

\caption{Reduction of terms of $\omega_5$ for $p \geq 7$.}
\label{tab:red}
\end{center}
\end{table}

For any suitable exponents $r, s \geq 2$ and index $\nu \geq 1$, we have a chain 
of reductions, e.g.,
\begin{alignat*}{2}
  \WQ_p \smod{p^{r+1}} \quad \longmapsto \quad & \WQ_p \smod{p^r} & \quad \longmapsto \quad & \WQ_p \smod{p^{r-1}} \\
\shortintertext{and}
  \omega_\nu \smod{p^{s+1}} \quad \longmapsto \quad & \omega_\nu \smod{p^s} & \quad \longmapsto \quad & \omega_\nu \smod{p^{s-1}},
\end{alignat*}
respectively. The coefficients $\omega_\nu$ coincide by reduction in any moduli 
and can thus be verified modulo~$p$ by entries of Table~\ref{tab:omega} below 
(values of $\omega_1, \ldots, \omega_4 \smod{p}$ are taken from \cite{Kellner:2025b}).
Moreover, the terms of these entries reveal a certain pattern such that 
${\omega_0 = -1}$ fits perfectly. Roughly speaking, one may conjecture that 
the elements of the sequences, e.g.,  
\[
  \left( -\tfrac{1}{\ell!} \BNN_1^\ell \right)_{\ell \geq 0}\!, \quad
  \left( -\tfrac{1}{3 \cdot \ell!} \BNN_1^\ell \BNN_{1,2} \right)_{\ell \geq 0}\!, \andq
  \left( -\tfrac{1}{5 \cdot \ell!} \BNN_1^\ell \BNN_{1,4} \right)_{\ell \geq 0}\!
\]
occur as terms of successive coefficients $\omega_\nu \smod{p}$, see Table~\ref{tab:omega}.

\begin{table}[H] \small
\setstretch{1.5}
\begin{center}
\begin{tabular}{r@{\;}c@{\;}l}
  \toprule
  $\omega_0$ & $\equiv$ & $-1 \smod{p}$ \\
  $\omega_1$ & $\equiv$ & $-\BNN_1 \smod{p}$ \\
  $\omega_2$ & $\equiv$ & $-\tfrac{1}{2} \BNN_1^2 \smod{p}$ \\
  $\omega_3$ & $\equiv$ & $-\tfrac{1}{6} \BNN_1^3 - \tfrac{1}{3} \BNN_{1,2} \smod{p}$ \\
  $\omega_4$ & $\equiv$ & $-\tfrac{1}{24} \BNN_1^4 - \tfrac{1}{3} \BNN_1 \BNN_{1,2} \smod{p}$ \\
  $\omega_5$ & $\equiv$ & $-\tfrac{1}{120} \BNN_1^5 - \tfrac{1}{6} \BNN_1^2 \BNN_{1,2} - \tfrac{1}{5} \BNN_{1,4} \smod{p}$ \\
  $\omega_6$ & $\equiv$ & $-\tfrac{1}{720} \BNN_1^6 - \tfrac{1}{18} \BNN_1^3 \BNN_{1,2}
  - \tfrac{1}{5} \BNN_1 \BNN_{1,4} - \tfrac{1}{18} \BNN_{1,2}^2 \smod{p}$ \\
  \bottomrule
\end{tabular}

\caption{Coefficients $\omega_\nu \smod{p}$ for $p \geq 7$.}
\label{tab:omega}
\end{center}
\end{table}


\bibliographystyle{amsplain}

\end{document}